\newtheorem{theorem}{Theorem}[section]
\newtheorem{lemma}[theorem]{Lemma}
\newtheorem{proposition}[theorem]{Proposition}
\newtheorem{remark}[theorem]{Remark}
\newtheorem{example}[theorem]{Example}
\newtheorem{problem}[theorem]{Problem}
\newtheorem{construction}[theorem]{Construction}
\def\Ga{\Gamma}
\newcommand{\PGL}{{\mathrm{PGL}}}	
\newcommand{\Sym}{{\mathrm{Sym}}}
\newcommand{\Alt}{{\mathrm{Alt}}}
\newcommand{\Aut}{{\mathrm{Aut}}}
\newcommand{\calF}{{\mathcal{O}\mathcal{G}}}
\newcommand{\calAG}{\mathcal{AG}}
\newcommand{\Cdr}[1]{{\sf C}^{\rightarrow}_{#1}}
\def\cent#1#2{{\bf C}_{{#1}}({{#2}})}
\newcommand{\ov}{\overline}
\newcommand{\GD}{\mathcal{G}(\Delta)}
\title[Edge-transitive oriented graphs of valency four]
{Finite edge-transitive oriented graphs of valency four: a global approach}
\author[J. A. Al-bar, A. N. Al-kenani,
N. M. Muthana, C. E. Praeger, and Pablo Spiga]
{Jehan A. Al-bar, Ahmad N. Al-kenani,\\ 
Najat Mohammad Muthana,  
Cheryl E. Praeger,\\
and Pablo Spiga}
\address[First four authors]{King Abdulaziz University\\
Jeddah\\
Saudi Arabia} 
\address[Cheryl E. Praeger]
{Also affiliated with: Centre for the Mathematics of Symmetry and Computation\\
School  of Mathematics and Statistics M019\\
The University of Western Australia\\
35 Stirling Highway\\
Crawley, WA 6009\\
Australia}
\address[Pablo Spiga]
{Dipartimento di Matematica e Applicazioni, University of Milano-Bicocca, Via Cozzi 53, 20125 Milano, Italy}
\thanks{This project was funded by the Deanship of Scientific Research (DSR), 
King Abdulaziz University, Jeddah, under grant no. HiCi/H1433/363-1. The authors, 
therefore, acknowledge with thanks DSR technical and financial support.
}
\email[Jehan A. Al-bar]{jalbar@kau.edu.sa; jaal[underscore]bar@hotmail.com}
\email[Ahmad N. Al-kenani]{analkenani@kau.edu.sa; aalkenani10@hotmail.com}
\email[Najat M. Muthana]{nmuthana@kau.edu.sa; }
\email[Najat M. Muthana (second email)]{najat[underscore]muthana@hotmail.com}
\email[Cheryl E. Praeger]{cheryl.praeger@uwa.edu.au}
\email[Pablo Spiga]{pablo.spiga@unimib.it}
\keywords{transitive group, orbital graphs and digraphs, oriented graph, quasiprimitive permutation group}
\begin{document}

\begin{abstract}
We develop a new framework for analysing finite connected, oriented graphs of valency $4$,
which admit a vertex-transitive and edge-transitive group of automorphisms
preserving the edge orientation. We identify a sub-family of `basic' graphs 
such that each graph of this type is a normal cover of at least one basic graph. 
The basic graphs either admit an edge-transitive group of automorphisms that is 
quasiprimitive or biquasiprimitive on vertices, or admit an
(oriented or unoriented) cycle as a normal quotient. We anticipate that each of these additional properties 
will facilitate effective further analysis, and we demonstrate that this is so for 
the quasiprimitive basic graphs. Here we obtain strong restirictions on the group
involved, and construct several infinite families of such graphs which, 
to our knowledge, are different from any recorded  in the literature so far. 
Several open problems are posed in the paper.
\end{abstract}

\maketitle

\section{Introduction}

We initiate a new approach to studying finite connected oriented graphs of valency four, 
which admit a vertex-transitive and edge-transitive group of automorphisms preserving edge 
orientations. We make a normal quotient reduction leading to what we call basic graphs which 
either admit quasiprimitive or biquasiprimitive actions on vertices, or are degenerate cycles
(see Table~\ref{tbl:basic} and the Framework discussion below). This new approach has been 
used before in other problems dealing with symmetries of graphs, and we believe that it will also bear fruit with regards to oriented graphs, and in particular, half-arc-transitive graphs of valency four.  

\subsection*{Oriented graphs.}
For a transitive permutation group $G$ on a set $X$, D. G. Higman~\cite{dgh} realised the importance of 
the $G$-action induced on ordered point-pairs, namely, for $g\in G$ and $x, y\in X$, 
$g:(x, y)\mapsto (x^g,y^g)$ where $x^g$ denotes the image of $x\in X$ under the action 
of $g$. Apart from the diagonal $\{(x,x)\mid x\in X\}$, each $G$-orbit $\Delta$ in this induced action corresponds a graph $\GD$ 
with vertex set $X$ admitting $G$ as a vertex-transitive and edge-transitive group of 
automorphisms. The edges of $\GD$ are the unordered pairs $\{x,y\}$ for which at least 
one of $(x,y)$ and $(y,x)$ lies in $\Delta$; such ordered pairs are called the \emph{arcs} of $\GD$. If both arcs $(x,y)$ and $(y,x)$ lie in 
$\Delta$ then $\Delta$ consists of all the arcs of $\GD$ and $G$ acts arc-transitively. 
All arc-transitive graphs arise in this way, and many classes of arc-transitive graphs have been studied intensively, such as distance 
transitive graphs \cite{PSY,vBon}, $s$-arc-transitive graphs \cite{montreal,seressBath}, 
and locally primitive and locally quasiprimitive graphs \cite{LPVZ,PPSS}. 
If $G$ is not arc-transitive then $\Delta$ consists of exactly one of $(x,y)$ and $(y,x)$ 
for each edge $\{x,y\}$ of $\GD$. Thus, by directing each edge $\{x,y\}$ from $x$ to $y$
if and only if  $(x,y)\in\Delta$, the orbital graph $\GD$ admits a $G$-invariant 
orientation of its edges and we say that the graph $\GD$ is \emph{$G$-oriented}.  
Notice that whenever $\GD$ is $G$-oriented, the graph $\GD$ has even \emph{valency}, 
say $m$, that is, each vertex $x$ lies in exactly $m$ edges, with $m/2$ of these edges 
$\{x,y\}$ directed from $x$ to $y$ and $m/2$ with the opposite orientation. All graphs 
that admit a $G$-invariant orientation, for some vertex- and edge- but not arc-transitive group $G$,
arise in this way. (The $G$-oriented graphs $\GD$ may also be regarded as directed graphs. However, we choose to view 
them as undirected graphs with an orientation induced by the group, both because this is 
the viewpoint taken in the literature mentioned below, and also since a given undirected graph may admit 
several interesting orientations corresponding to different groups, see for example, \cite{janc2}.) 
Since the group $G$ acts transitively on vertices, all connected components of $\GD$
are isomorphic, and the action induced by $G$ on each component is 
vertex-transitive, and edge-transitive, and preserves the $G$-invariant orientation. 
\emph{Thus we restrict attention to connected $G$-oriented graphs $\GD$.}
Let $\calF(m)$ denote the family of pairs $(\GD,G)$, where $\Delta$ is a $G$-orbit on ordered pairs for a transitive group $G$, and $\GD$ is connected and $G$-oriented
of valency $m$.
\medskip

Maru\v{s}i\v{c}~\cite[p.221]{Mar1} 
described progress up to 1998 on studying $\calF(4)$ as `thrilling'. One reason is undoubtedly a link 
to the study of maps on Riemann surfaces: research on this topic is led by Maru\v{s}i\v{c} and Nedela, 
and we sketch details of this link in Subsection~\ref{sec-medial}.    
Another reason is the fascinating internal structure discovered by Maru\v{s}i\v{c} connected 
with so-called \emph{alternating cycles}, which we discuss in Subsection~\ref{sec-prim}. 
They lead under certain conditions to a quotient of a pair $(\Gamma, G)\in\calF(4)$
which still lies in $\calF(4)$. However it is not clear how to describe the pairs
$(\Gamma, G)$ for which this procedure gives no `reduction'. 

We present here a new framework for studying the family $\calF(4)$
based on a theory of normal graph quotients, and the identification of a sub-family 
of \emph{basic} members of $\calF(4)$ with additional symmetry properties. 
This gives the potential of applying the theory of 
finite quasiprimitive permutation groups, and thereby exploiting the 
finite simple group classification to study these graphs.  
This has led us to new insights and new constructions of graphs in this family, 
to our knowledge not seen before in the literature. Our approach also provides a new 
framework for analysing known families of graphs in $\calF(4)$ (see below).  

\subsection*{Normal quotients.}
For a $G$-oriented graph $\Gamma$ with vertex set $X$, and a normal subgroup $N$ of $G$,
we define the \emph{$G$-normal quotient} $\Ga_N$ as follows: the 
vertex set is the set of $N$-orbits in $X$, and a pair $\{B,C\}$ 
of distinct $N$-orbits forms an edge of $\Ga_N$ if and only if  
there is at least one edge $\{x,y\}$ of $\Ga$ with $x\in B$ and 
$y\in C$. Note that $\Ga_N$ is defined as a graph 
with no specified orientation on its edges. 
Various degeneracies may occur when forming such quotients. For example,
$\Ga_N$ may consist of a single vertex if $N$ is vertex-transitive, and more generally, the valency of $\Ga_N$ may be a proper divisor of 
the valency  of $\Ga$. 
The graph 
$\Ga$ is called a \emph{$G$-normal $\ell$-multicover} of $\Ga_N$ if,
for each edge $\{B,C\}$ of $\Ga_N$, each vertex of $B$ is joined 
by an edge to exactly $\ell$ vertices of $C$ (or equivalently, 
each vertex of $C$ is joined by an edge to exactly $\ell$ vertices of $B$). In 
the case $\ell=1$ we usually say simply that $\Ga$ is a \emph{$G$-normal
cover} of $\Ga_N$. 
Our first result is an analogue for $G$-oriented graphs of the 
reduction theorem \cite[Theorem 1.1]{GP} of Gardiner and the 
fourth author for arc-transitive graphs of valency $4$. (A permutation group is \emph{semiregular}
if the only element fixing any point is the identity.)

\begin{theorem}\label{thm:nquot}
Let $(\Ga,G)\in\calF(4)$ with vertex set $X$, and let $N$ be a normal 
subgroup of $G$. Then $G$ induces a permutation group $\ov{G}$  
on the set of $N$-orbits in $X$, and either (i) $(\Ga_N,\ov{G})$ is 
also in $\calF(4)$, $\Ga$ is a $G$-normal cover of $\Ga_N$, $N$
is semiregular on vertices, and $\ov{G}=G/N$; or (ii) $(\Ga_N,\ov{G})$ is 
degenerate as in  one of the lines of 
Table~$\ref{tbl:nquot}$.
\end{theorem}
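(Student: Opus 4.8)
The plan is to exploit the fact that a $G$-oriented graph of valency $4$ has, at each vertex, exactly two out-neighbours and two in-neighbours, and that $G$ acts transitively on the arc-set $\Delta$, so that the vertex-stabiliser $G_x$ is transitive on the two out-arcs at $x$ and, separately, on the two in-arcs at $x$. First I would record the easy structural facts: because $N\trianglelefteq G$, the group $G$ permutes the $N$-orbits in $X$, giving the homomorphism $G\to\ov{G}$ with $N$ contained in its kernel; since $G$ is transitive on $X$ all $N$-orbits (the vertices of $\Ga_N$) have equal size, and $\ov{G}$ is transitive and edge-transitive on $\Ga_N$, which is connected because $\Ga$ is. A short counting argument using the transitivity of $N$ on each orbit shows that for adjacent blocks $B,C$ the number $\ell_{BC}$ of neighbours in $C$ of a vertex of $B$ is independent of the chosen vertex, that $\ell_{BC}=\ell_{CB}$ (as $|B|=|C|$), and that this common value $\ell$ is independent of the edge $\{B,C\}$ by edge-transitivity of $\ov{G}$. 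Thus $\Ga$ is automatically a $G$-normal $\ell$-multicover of $\Ga_N$.

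The heart of the argument is a local analysis at a block $B=x^N$. I would track how the four neighbours of $x$ --- the out-pair $\{y_1,y_2\}$ and the in-pair $\{z_1,z_2\}$ --- distribute among the $N$-orbits, noting that $G_x$ fixes $B$ and is transitive on each pair, so the two out-neighbours lie in a single $G_x$-orbit of blocks and likewise the two in-neighbours. This forces the possible configurations into a short list according to (a) whether the two out-blocks coincide, (b) whether the two in-blocks coincide, (c) whether an out-block equals an in-block, and (d) whether $B$ itself occurs (a within-block arc); I would also use the in/out symmetry coming from the fact that $(\Ga,G)$ with the reversed orientation $\Delta^*$ again lies in $\calF(4)$. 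The relation $4=e+d\ell$, where $d$ is the valency of $\Ga_N$ and $e$ the number of within-block neighbours of $x$, then restricts $(d,\ell,e)$ to finitely many values and pins down the coarse shape of $\Ga_N$.

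In the principal (non-degenerate) case the four neighbours lie in four distinct blocks, none equal to $B$, with the two out-blocks distinct from the two in-blocks, so $\ell=1$, $e=0$ and $d=4$. Here the orientation descends: declaring the edge $\{B,C\}$ of $\Ga_N$ to run from $B$ to $C$ exactly when the $\Ga$-arcs between $B$ and $C$ run from $B$ to $C$ gives a well-defined, $\ov{G}$-invariant orientation (well-definedness is precisely the failure condition isolated in case (c)), and one checks that $\ov{G}$ is not arc-transitive, so $(\Ga_N,\ov{G})\in\calF(4)$ and $\Ga$ is a $G$-normal cover. To obtain semiregularity I would use a rigidity argument: if an element fixes a vertex $x$ and fixes every block setwise, then since $\ell=1$ it must fix the unique neighbour of $x$ in each adjacent block, and by connectivity of $\Ga$ it then fixes all of $X$, hence is the identity. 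Applying this with an element of $N_x$ shows $N$ is semiregular, and applying it to the kernel $K$ of $G\to\ov{G}$ (after first multiplying by a suitable element of $N$ so as to fix $x$) shows $K=N$, whence $\ov{G}=G/N$.

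Finally I would match each remaining configuration from the local analysis --- $N$ transitive (so $\Ga_N$ is a single vertex), a collapse of the out-pair or of the in-pair, the coincidence of an out-block with an in-block (which makes an edge of $\Ga_N$ carry both orientations, producing an unoriented quotient or a cycle), and the presence of within-block arcs --- to the corresponding line of Table~\ref{tbl:nquot}. The main obstacle is exactly this last bookkeeping: verifying that the case list is exhaustive and, above all, tracking the orientation carefully through the quotient, since the distinction between the covering case (i) and the degenerate case (ii) turns on whether the out- and in-neighbourhoods of a vertex stay separated modulo $N$. I expect the orientation-preservation checks, rather than the valency arithmetic, to require the most care.
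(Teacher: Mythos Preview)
Your plan is correct and the core ideas coincide with the paper's: the $G_x$-transitivity on $\Delta(x)$ forcing a uniform distribution of out-neighbours among blocks, the $\ell$-multicover counting, the dichotomy between ``orientation descends'' and ``arcs go both ways between $B$ and $C$'', and the connectivity/rigidity argument for semiregularity and for $K=N$ are exactly what the paper uses.

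The organisational difference is that the paper first proves a general statement (Proposition~\ref{prop:nquot}) valid for all $(\Gamma,G)\in\calF(m)$ with $m\geq 4$: either $(\Gamma_N,\ov G)\in\calF(k)$ with $\Gamma$ an $(m/k)$-multicover for some even $k\mid m$, or $(\Gamma_N,\ov G)\in\calAG(k)$ with $m/k$ even. The dichotomy there is simply whether, for an edge $\{B,C\}$ of $\Gamma_N$, the $\Gamma$-arcs between $B$ and $C$ lie entirely in $\Delta\cap(B\times C)$ (orientation descends) or meet both $\Delta\cap(B\times C)$ and $\Delta\cap(C\times B)$ (forcing $\ov G$ arc-transitive). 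Theorem~\ref{thm:nquot} is then read off by listing the possible $k$ when $m=4$. Your configuration analysis (out-pair collapses / in-pair collapses / out-block meets in-block / within-block arcs) is the $m=4$ specialisation of this, done directly; it reaches the same endpoint with the same arithmetic $4=d\ell$ once you note that a within-block arc forces $\Gamma_N=K_1$ (so effectively $e=0$ in the remaining cases). The paper's route buys a clean general statement for all even valencies at no extra cost; your route is slightly quicker if one only cares about valency $4$.
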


\begin{table}
\begin{center}
\begin{tabular}{cc|l}
\hline
&&\\
$\Ga_N$ & $\ov{G}$ & Comments \\ \hline
$K_1$               & $1$      & $N$ is vertex-transitive \\
$K_2$               & $Z_2$    & $\Ga$ is bipartite, $N$-orbits form the bipartition \\
$C_r$               & $D_{2r}$ & $r\geq3$, $\Ga$ is a $G$-normal $2$-multicover of $\Ga_N$\\
$C_r$ 		    & $Z_r$    & $r\geq3$, $\Ga$ is a $G$-normal $2$-multicover of $\Ga_N$ \\ \hline
\end{tabular}
\caption[DegenCases]{Degenerate cases for Theorem~\ref{thm:nquot}(ii)}
\label{tbl:nquot}
\end{center}
\end{table}


In Table~\ref{tbl:nquot}, the graphs $K_r, C_r$ denote a complete 
graph and a cycle on $r$ vertices, respectively. Moreover, the cycle $\Ga_N=C_r$ is $\ov G$-oriented if $\ov G = Z_r$,
and $\ov G$-arc-transitive if $\ov G = D_{2r}$; and we say that the cycle is oriented or unoriented, respectively.  
To set this theorem in a broader context we analyse the possibilities for $G$-normal 
quotients of $G$-oriented graphs of arbitrary valency in Proposition~\ref{prop:nquot} 
and derive Theorem~\ref{thm:nquot} from this result.  Theorem~\ref{thm:nquot}
shows that the family $\calF(4)$ is not closed under forming normal quotients as some
of the degenerate cases in Table~\ref{tbl:nquot} may arise. We call the pairs
$(\Ga_N,\ov{G})$ occurring in Table~\ref{tbl:nquot} \emph{degenerate pairs}. 
Clearly the first line with $\Ga_N=K_1$ occurs as a quotient for all $(\Ga,G)\in\calF(4)$
(take $N=G$), and the second line occurs whenever the graph $\Ga$ is bipartite (take $N$ the
index 2 subgroup of $G$ stabilising the two parts of the bipartition). 
The other two lines of Table~\ref{tbl:nquot} can also occur (see for instance, Example~\ref{ex:najat} 
for line 4, and see \cite{janc1,janc2} for more examples).


%

We call a pair $(\Ga,G)\in\calF(4)$  \emph{basic} if all of its $G$-normal quotients,
relative to nontrivial normal subgroups of $G$,
are degenerate pairs. Every $(\Ga,G)\in\calF(4)$
has at least one basic $G$-normal quotient $(\Ga_N,\ov{G})\in\calF(4)$ (Lemma~\ref{lem:basic1}), 
and our aim is to explore the possible kinds of basic pairs. It is helpful to 
subdivide them broadly as described in Table~\ref{tbl:basic}, since each type gives additional 
information about the group action. If the only normal quotients are as in line 1 of 
Table~\ref{tbl:nquot}, then all nontrivial normal subgroups of $G$ are transitive on the vertex set $X$; 
such groups $G$ are called \emph{quasiprimitive} (Table~\ref{tbl:basic}, line 1).
Similarly if  the only normal quotients are as in lines 1 or 2 of 
Table~\ref{tbl:nquot} and line 2 does occur, then 
every nontrivial normal subgroup of $G$  has at most two orbits in $X$, and 
at least one normal subgroup has two orbits; such a group $G$ is 
called \emph{biquasiprimitive} (Table~\ref{tbl:basic}, line 2).
For all other basic pairs there is at least one normal quotient as in line $3$ or $4$ of 
Table~\ref{tbl:nquot}, and the group $G$ has a dihedral or cyclic quotient, 
respectively (Table~\ref{tbl:basic}, line 3).

\begin{table}
\begin{center}
\begin{tabular}{lll}
\hline
Basic Type & Possible $\Ga_N$ for $1\ne N\vartriangleleft G$& Conditions on $G$-action \\ &&on vertices \\ \hline
Quasiprimitive & $K_1$ only & quasiprimitive \\
Biquasiprimitive & $K_1$ and $K_2$ only ($\Ga$ bipartite) & biquasiprimitive \\
Cycle & at least one $C_r$ ($r\geq3$) & at least one quotient action\\ && $D_{2r}$ or $Z_r$ \\ \hline
\end{tabular}
\caption[Basic]{Types of Basic pairs $(\Ga,G)$ in $\calF(4)$}
\label{tbl:basic}
\end{center}
\end{table}

\medskip\noindent
{\bf Framework for studying $\calF(4)$}.\quad  Theorem~\ref{thm:nquot} and 
the remarks above suggest a new framework for studying oriented graph-group 
pairs in $\calF(4)$, consisting of the following broad steps (see Table~\ref{tbl:basic}). 

\begin{enumerate}
\item Develop a theory to describe the quasiprimitive basic pairs in $\calF(4)$.
\item	Develop a theory to describe the biquasiprimitive basic pairs in $\calF(4)$.
\item	Develop a theory to describe the basic pairs in $\calF(4)$ of cycle type.
\item	Develop a theory to describe the $G$-normal covers $(X, G)\in\calF(4)$ of 
basic pairs of each of these three types. This theory should, for example, be 
powerful enough to describe, for a given basic pair $(Y, H)\in \calF(4)$, all 
pairs $(X, G) \in \calF(4)$ such that $X$ is a $G$-normal cover of $(Y, H)$. 
\item 	Apply this theory: for a given pair $(X, G) \in \calF(4)$, determine whether or not $(X, G)$ is basic. If it is non-basic, then find a basic $G$-normal quotient of it in $\calF(4)$ (or all of its basic $G$-normal quotients).
\end{enumerate}
 
Each of the steps in this framework requires delicate analysis. 
Substantial progress on completing the framework will provide a global 
structural view of the family $\calF(4)$. Regarding Step (4), there is a 
well-developed theory that will determine the normal covers $(X,G)$
of certain special types, for example, the normal quotients $Y=X_N$ modulo 
elementary abelian normal subgroups $N$ of $G$. The techniques used range from 
voltage assignments, representation theory, to studying universal covering groups,
see recent expositions of the general theory (with good discussions of the literature) in \cite{CM2,MMP},
and applications to $\calF(4)$ in \cite{CPS,PP}. 
A complete determination of all normal covers is probably not feasible.

In this paper we address Step (1) with short comments about Steps (2) and (3).
We make only a few brief comments on basic pairs of cycle type, 
as these pairs will be the theme of further work by the authors in \cite{janc1,janc2}. 
There are many infinite families of such graphs and we exhibit one such family in
Example~\ref{ex:najat}. 
A structure theorem is available to study quasiprimitive permutations groups
in \cite{qp} analogous to the O'Nan--Scott theorem for studying finite primitive 
permutation groups. Here we apply this theory to determine the possible types of
quasiprimitive groups $G$ that can arise for $(\Ga,G)\in\calF(4)$, that is,
$(\Ga,G)$ is a quasiprimitive basic pair (Step 1 of the framework). 
It would be interesting to study biquasiprimitive basic pairs in a simlar way.  
A group theoretic tool for this is available in \cite{biqp}, but is far 
less detailed than the quasiprimitive analogue and is more difficult to apply.

 \begin{problem}\label{prob:biqp}
Describe the biquasiprimitive basic graph-group pairs in $\calF(4)$.   
 \end{problem}


\begin{theorem}\label{thm:basicqp}
Suppose that $(\Ga,G)\in\calF(4)$ is basic of quasiprimitive type.
Then $G$ has a unique minimal normal subgroup $N=T^k$ for some finite nonabelian simple group $T$, 
$k\leq2$, and one of the following holds.
 \begin{enumerate}
 \item[(a)] $k=1$, and there are many known examples; or
 \item[(b)] $k=2$, $G=N\cdot 2$, and $\Ga$ is a Cayley graph for $N$ as in Construction~$\ref{ex:tw}$; or
\item[(c)] $k=2$, $N$ is a not regular, and there are many known examples.
 \end{enumerate}
 \end{theorem}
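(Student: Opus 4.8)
The plan is to exploit the local action forced by edge-transitivity together with the quasiprimitive structure theorem of \cite{qp}. First I would fix a vertex $\alpha$ and record that, because $\Delta$ is a single $G$-orbit on ordered pairs and $\Ga$ has valency $4$, the vertex $\alpha$ has exactly two out-neighbours and two in-neighbours, and $G_\alpha$ acts transitively on each of these two $2$-element sets. Two consequences drive everything. (i) Any $M\trianglelefteq G_\alpha$ either fixes all four neighbours of $\alpha$ or acts transitively on one of the two $2$-sets; in particular a perfect (e.g.\ nonabelian simple) normal subgroup of $G_\alpha$, having no subgroup of index $2$, must fix all four neighbours. (ii) A propagation lemma: if $M\trianglelefteq G$ and $M_\alpha$ fixes $\Ga(\alpha)$ pointwise, then $M_\alpha\le M_\beta$ for each neighbour $\beta$ with equality of orders (point stabilisers of $M$ are conjugate), so $M_\alpha=M_\beta$; since $\Ga$ is connected this forces $M_\alpha=M_\gamma$ for all $\gamma$, whence $M_\alpha=1$ and $M$ is semiregular. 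Thus a non-regular minimal normal subgroup must act nontrivially, hence transitively, on at least one of the two $2$-sets at $\alpha$.

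Next I would feed the vertex action into the quasiprimitive O'Nan--Scott classification, writing $N=\soc(G)=T^k$ with $T$ simple, and eliminate types one by one. For abelian $N$ (affine type), regularity and connectivity force $N$ to be generated by the $2$-element connection set, so $N=\Z_p^d$ with $d\le 2$; orientation then confines $G_\alpha$ to the setwise stabiliser of that $2$-set, which is too small to act irreducibly, contradicting quasiprimitivity. To secure a \emph{unique} minimal normal subgroup I would rule out the two holomorph types, which have two regular minimal normal subgroups: there the relevant $N_\alpha$ is a full diagonal subgroup isomorphic to a power of $T$ acting on a regular factor by conjugation, and being perfect it fixes the connection set pointwise by (i), placing that set inside a trivial centre, which is impossible. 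The same fixed-point idea disposes of the diagonal types (simple and compound diagonal): $N_\alpha$ then contains a nonabelian simple subgroup normal in $G_\alpha$ whose only fixed vertex is $\alpha$, contradicting that by (i) it fixes the two out-neighbours.

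The heart of the proof, and the step I expect to be the main obstacle, is to bound the number $k$ of simple direct factors and to separate the regular from the non-regular case. The key structural fact is that, since $N$ is the unique minimal normal subgroup, $G$ permutes the $k$ factors transitively, and because $G=NG_\alpha$ with $N$ inside the base group, the induced map $\rho\colon G_\alpha\to S_k$ already has transitive image. In the regular (twisted wreath) case I would write $\Ga=\mathrm{Cay}(T^k,S)$ with $|S|=2$ and $\langle S\rangle=T^k$; the full subdirect condition makes the $k$ coordinate pairs of $S$ pairwise $\Aut(T)$-inequivalent, so every element of $G_\alpha$ fixing $S$ pointwise induces the identity on the factors. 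Hence $\rho$ factors through the action of $G_\alpha$ on the $2$-set $S$, giving $|\rho(G_\alpha)|\le 2$; transitivity on $k$ factors then yields $k=2$, and the same inequivalence shows $G_\alpha\cong Z_2$, so $G=N\cdot 2$. This is exactly case (b) and Construction~\ref{ex:tw}.

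For the remaining non-regular types I would analyse the two out-neighbours through the $N_\alpha$-orbit structure in the product action: an $N_\alpha$-orbit of out-neighbours is governed coordinatewise by the point stabiliser in $T$, and confronting this with the valency being $4$, with the $G_\alpha$-invariance of the out-neighbour set, and with the transitivity of $\rho$, should force $k=2$ and exclude any compound-diagonal configuration, giving case (c). I expect this to be the most delicate part, since here the constraints from the local $2$-sets, from connectivity, and from the global product structure must be reconciled simultaneously, so that the bound on $k$ comes not from a single clean lemma but from a careful case analysis. Finally $k=1$ is the almost simple case (a). I would complete the theorem by assembling the examples: the known families realising cases (a) and (c), together with a verification that Construction~\ref{ex:tw} both produces and exhausts the graphs of case (b).
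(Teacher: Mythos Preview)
Your overall plan---run through the quasiprimitive O'Nan--Scott types and bound $k$ in the surviving ones---is the paper's plan too, and for the twisted wreath case your argument is essentially the paper's Lemma~\ref{lem:twrproof}: once $N$ is regular, the stabiliser is $Z_2$ acting faithfully on the $2$-element connecting set, and since it must permute the $k$ simple factors transitively you get $k=2$ and Construction~\ref{ex:tw}. However, the paper short-circuits much of your type-by-type elimination by using a fact you never invoke: for $(\Ga,G)\in\calF(4)$ the vertex stabiliser $G_\alpha$ is a $2$-group (see Section~\ref{sec-stab}). This single observation, fed into \cite{qp}, immediately gives that $N$ is nonabelian (your affine argument is fine but Lemma~\ref{lem:ha} is quicker), that $N$ is the \emph{unique} minimal normal subgroup (so the holomorph types are gone without your diagonal/centre discussion), and that only the {\sc As}, {\sc Tw}, {\sc Pa} types survive (so the diagonal types vanish without your perfect-subgroup fixed-point lemma). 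Your observations (i) and (ii) are correct, but they are essentially a roundabout rediscovery of ``$G_\alpha$ is a $2$-group'', and the paper uses the latter directly.

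The genuine gap is the product action case, which you yourself flag as the obstacle. Your proposal there---analyse $N_\alpha$-orbits on out-neighbours coordinatewise and do a ``careful case analysis''---is not an argument, and I do not see how to push it through: the coordinatewise orbit structure on its own does not obviously bound $k$. The paper's Lemma~\ref{lem:pa} uses a quite different and specific idea in the Tutte--Sims tradition. One first shows that $G$ acts \emph{regularly} on $s$-arcs for the maximal $s$ with $G$ transitive on $s$-arcs; this yields $G_{x_0}=\langle h_1,\dots,h_s\rangle$ where $h_1$ is an involution and $h_{i}=h_{i-1}^{g^{-1}}$ for a fixed $g\in G$ shifting an $s$-arc. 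Projecting by $\pi:G\to\Sym(k)$ (with $N$ in the kernel, so $\pi(G)=\pi(G_{x_0})$), the image $K$ is a $2$-group generated by $\tau$-conjugates of the single involution $\sigma=\pi(h_1)$, where $\tau=\pi(g)\in K$. If $\langle\sigma\rangle$ were proper in $K$, any maximal subgroup of $K$ containing $\sigma$ would be normal (as $K$ is a $2$-group), hence $\tau$-invariant, hence would contain all the generators---a contradiction. Thus $K=\langle\sigma\rangle$ has order at most $2$, and transitivity of $K$ on the $k$ factors forces $k=2$. This is the missing key lemma in your outline; without it, the bound $k\le 2$ in the non-regular case is unsupported.
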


\begin{remark}\label{rem:qp}{\rm
(i)  As commentary on this result we note that Li, Lu and 
Maru\v{s}i\v{c} \cite[Theorem 1.4]{LLM}
showed in 2004 that there are no vertex-primitive graph-group pairs in $\calF(4)$. 
This confirms the suitability of normal quotient reduction to quasiprimitive
actions as being the appropriate group theoretic reduction in the non-bipartite case. 

(ii) An infinite family of examples for Theorem~\ref{thm:basicqp}(a) with $T$
an alternating group  $\Alt(n)$ was given by Maru\v{s}i\v{c} in \cite{Mar}.
In these examples  $T$ is the full automorphism group and the vertex stabilisers  are elementary abelian of 
unboundedly large order as $n$ grows. In Construction~\ref{con:snbigstab} we give a 
similar construction using the groups $\Sym(n)$, which also have unbounded
vertex stabilisers. Our construction is slightly simpler than that in \cite{Mar} since we do not have the restriction that
 $\Sym(n)$ is the full automorphism group. 
In addition we provide two general constructions, namely Construction~\ref{ex:simpleCayley}
when $T$ is regular and Construction~\ref{con:simple} when $T$ 
is not regular. Both constructions rely on certain 2-generation 
properties of the simple group $T$, and a small concrete example 
is given of each construction. 

(iii) A general construction method is given in Construction~\ref{con:nonsimple}
for examples satisfying Theorem~\ref{thm:basicqp}(c), and a small concrete example is given.
This construction also depends on certain 2-generation properties of nonabelian simple groups.

(iv) Theorem~\ref{thm:basicqp} shows that exactly three of the eight 
types of quasiprimitive groups $G$ arise in basic pairs $(\Ga,G)\in \calF(4)$.
These types are sometimes called {\sc As, Tw, Pa} for cases (a)--(c) respectively. 
}
\end{remark}



\section{Brief comments on edge-transitive oriented graphs}\label{sec-lit} 


If the full automorphism group of a finite $G$-oriented vertex-transitive, edge-transitive graph $\GD$ preserves the orientation (and so does not act arc-transitive\-ly), then the 
graph $\GD$ is called \emph{$\frac{1}{2}$-transitive} or \emph{half-arc-transitive}. Also, a graph admitting $G$ as a vertex-transitive and edge-transitive, but not arc-transitive, group of automorphisms is sometimes referred to in the literature as \emph{$(G,\frac{1}{2})$-transitive}. For example, an $n$-cycle $C_n$, relative to the cyclic group $G=Z_n$, is $G$-oriented, vertex-transitive and edge-transitive, and hence is $(G,\frac{1}{2})$-transitive. However $C_n$ is not  
$\frac{1}{2}$-transitive since its full automorphism group $D_{2n}$ is 
arc-transitive. In 1966, Tutte~\cite{Tut1} asked whether any $\frac{1}{2}$-transitive 
graphs exist.  As any connected graph of valency $2$ is a cycle, it follows 
that all $\frac{1}{2}$-transitive graphs must necessarily have even valency at 
least $4$. In 1970 Bouwer~\cite[Proposition 2]{Bow} answered Tutte's question 
affirmatively by constructing, for each $k\geq2$, a $\frac{1}{2}$-transitive 
graph of valency $2k$ of order $6\cdot 9^{k-1}$ (that is to say, having 
$6\cdot 9^{k-1}$ vertices).

Thus, for each even integer $m\geq2$, $\calF(m)$ is the family of all pairs $(\Ga,G)$ 
such that $\Ga$ is a connected  $(G,\frac{1}{2})$-transitive graph of valency $m$. 
Let $\calF:=\cup_m\calF(m)$. It is not difficult to 
construct pairs $(\Ga,G)$ in $\calF(m)$ for arbitrary $m$ using a lexicographic 
product construction. Moreover, Bower's work shows that, for each even $m$, $\calF(m)$ 
contains a  $\frac{1}{2}$-transitive pair $(\Ga,\Aut(\Ga))$. In the decades 
since Bower's work these graphs have been well-studied. The major pioneer 
in this work is Dragan Maru\v{s}i\v{c} and in 1998 Maru\v{s}i\v{c} published an  excellent 
survey~\cite{Mar1} of results and open problems up to that time. Summaries 
of more recent work are available in \cite{kmswx,MS}, and new advances appear regularly, 
such as the theory of alternets developed in \cite{hkd,Wil}.

As Maru\v{s}i\v{c} remarked in \cite{Mar1}, research on graph--group
pairs in $\calF$ took three main directions: (i) the search for, and study of,
pairs $(\Ga,G)$ in $\calF$ with $G$ primitive on vertices; (ii) classification 
of pairs $(\Ga,\Aut(\Ga))\in\calF$ of certain specified orders, for example, the 
order (number of vertices) being twice a prime, or a product of two primes, (or 
four times a prime~\cite{kmswx}),  etc; and (iii) an intensive study of the family 
$\calF(4)$. His paper \cite{Mar1} gives details of work in each of these three 
directions. We make a few brief comments here on the work of  
Maru\v{s}i\v{c} and others related to pairs $(\Ga,G)\in \calF(4)$.  In particular we discuss 
a different kind of quotienting operation for $\calF(4)$ (Section~\ref{sec-prim}), the way
such pairs arise from regular maps on surfaces (Section~\ref{sec-medial}), and
information on group structure making classification possible for orders up to 1,000  (Section~\ref{sec-stab}),

\subsection{The alternating cycles of \texorpdfstring{Maru\v{s}i\v{c}}{Marusic}}\label{sec-prim}

An alternating cycle in $\Ga$, where $(\Ga,G)\in\calF(4)$, is a cycle 
such that each pair of consecutive edges is oriented in opposite directions. 
Maru\v{s}i\v{c}~\cite[Proposition 2.4]{Mar2} showed that the alternating cycles all have the 
same even length and they partition the edge set of $\Ga$.
It is possible that there are only two alternating cycles and in this case 
Maru\v{s}i\v{c} proved that $\Ga$ belongs to an explicitly described family 
of circulant graphs \cite[Proposition 2.4]{Mar2}. If $\Ga$ contains more than 
two alternating cycles, then the non-empty intersections of the vertex sets of 
distinct alternating cycles have a fixed size,  called the \emph{attachment number} 
of $(\Ga, G)$, and form a system of blocks of imprimitivity for $G$. Moreover, 
the corresponding quotient graph is also a member of $\calF(4)$ relative to the 
induced $G$-action \cite[Theorem 1.1 and Theorem 3.6]{MP}. It is possible that 
the attachment number is $1$, and in this case the alternating cycles are 
said to be \emph{loosely attached}, the quotient is just the graph $\Ga$, 
and no reduction is achieved. However, for any positive integer $k$, there are
infinitely many examples of pairs $(\Ga,G)\in\calF(4)$ with attachment number $k$~\cite{MW}. 
Also if the attachment number is at least $3$, then a vertex stabiliser $G_x$ has size 
$2$ \cite[Lemma 3.5]{MP}. The attachment number is at most half the length 
of an alternating cycle. When this maximum is attained the cycles are said to be 
\emph{$G$-tightly attached}, and all pairs $(\Ga, G)$ in this 
case have been classified, in \cite[Theorem 3.4]{Mar2} and \cite[Lemma 
4.1 and Theorem 4.5]{MP} when the attachment number is odd and even, respectively. 
This classification has been `simplified and sharpened' by Wilson~\cite[Section 8]{Wil},
and all $\frac{1}{2}$-transitive examples identified by \v{S}parl~\cite{Spa}.

The possibility of understanding the internal structure of graph--group pairs in $\calF(m)$ for larger values of $m$, 
by identifying smaller quotients in the family is tantalising, and as far as we know has 
not been explored for general valencies $m$. Very recent work of Hujdurovi\'c~\cite{hkd}
studies vertex subsets of graphs in $\calF(m)$ called alternets, which are analogues for general $m$ of 
alternating cycles and were introduced in \cite[Section 4]{Wil}. They are equivalence classes of the so-called reachability relation introduced 
in \cite{cpw} for infinite arc-transitive digraphs. The paper \cite{hkd} focuses on oriented graphs with a small number of alternets.


\subsection{Regular maps and their medial graphs}\label{sec-medial}

Cellular decompositions of surfaces are called \emph{maps}, and a common way to construct maps is 
by embedding a graph into a surface. For a map $M$ given by embedding a graph $\Sigma$ into some 
surface, the map group $\Aut M$ is the subgroup of automorphisms preserving the surface, and $M$ 
is called \emph{regular}
if $\Aut M$ acts transitively (and therefore regularly) on the set of arcs
of $\Sigma$. Regular maps are extensively studied in various branches of mathematics, including
combinatorics, Riemann surfaces and group theory \cite{Dyck, JS, Sz,Tut} going back to work of Tutte and Dyck,
with many recent papers inspired by the ground-breaking work of Jones and Singerman~\cite{JS}.

Some pairs $(\Ga,G)\in\calF(4)$ arise as medial graphs of regular maps,
and the work of Nedela and Marusic \cite{MN} 
established medial graphs/maps as a fundamental tool for studying regular maps on surfaces.
The medial
graph of a map $M$ (embedded into the same surface as $M$)  
is described as follows, see \cite[page 346]{MN} or \cite{Tut3}: subdivide each edge of 
$M$ with one new vertex. If $e$ and $f$ are two consecutive edges in a
boundary walk of a face $F$ of $M$, then join the two corresponding new vertices by a new edge
in $F$. Finally remove from the surface all the original vertices together with incident (old) edges. We
obtain a $4$-valent graph  ${\rm Med}(M)$ embedded into the surface, 
called the \emph{medial graph} of $M$. By its construction, it is embedded in the same 
surface as the original map $M$, and this embedded graph is called the medial
map of $M$. Maru\v{s}i\v{c} and Nedela \cite[Proposition 2.1]{MN} showed that  $({\rm Med}(M),G)
\in\calF(4)$, where $M$ is a regular map and $G$ is the map group,
and moreover vertex stabilisers $G_x$ have order $2$. They also proved a converse in 
\cite[Proposition 2.2]{MN}, namely that each $(\Ga,G)\in\calF(4)$ with vertex 
stabilisers $G_x$ of order $2$ arises as a medial map of some regular map.   $M$ with automorphism 
group $G$ such that ${\rm Med}(M)\cong\Ga$; and $M$ is determined up to duality 
and reflection. They also show  \cite[Theorem 4.1(4)]{MN} that for the special class of `negatively 
self-dual' regular maps $M$, the medial graph ${\rm Med}(M)$ admits a group $G$ which is twice 
as large as the map group of $M$, giving a second pair $({\rm Med}(M),G)
\in\calF(4)$, this time with vertex stabilisers $Z_2^2$. 


\subsection{Vertex stabilisers}\label{sec-stab}
As mentioned above, pairs $(\Ga,G)\in\calF(4)$ with attachment number at least $3$ have 
vertex stabilisers of order $2$, and  the sizes of vertex stabilisers for graph--group pairs in $\calF(4)$ 
associated with medial graphs can be at most $4$, while the vertex stabilisers can be unboundedly 
large, even with $G$ a finite alternating group \cite[Theorem 1.1]{Mar}. 
In general the vertex stabilisers are $2$-groups of nilpotency class at most 2,
and they form the family of  \emph{concentric} $2$-groups, 
studied by Maru\v{s}i\v{c} and Nedela, see \cite[Sections 5--7]{MN2}, and first discovered by Glauberman \cite{Gla} by studying a rather different problem. (In fact, roughly speaking, Glauberman in \cite{Gla} investigates groups $G$ containing a finite $p$-subgroup $P$ and an element $g$ such that $G=\langle P,P^g\rangle$ with $|P:P\cap P^g|=p$. By taking $G$ to be a $\frac{1}{2}$-transitive subgroup of automorphisms of a $4$-valent graph, $P$ to be a vertex stabiliser $G_x$, and $g$ an element of $G$ mapping $x$ to one of its neighbours, we see that $G, P, g$ satisfy the hypotheses considered by Glauberman. Hence some of the results of Maru\v{s}i\v{c} and Nedela follow from the work of Glauberman.) 
The smallest nonabelian concentric $2$-group is the dihedral group $D_8$ of order $8$, and the first construction 
of a $\frac{1}{2}$-transitive graph with vertex stabilisers $D_8$ was given by Conder and  Maru\v{s}i\v{c} in \cite{CM}.
Their graph has $10,752$ vertices, and it was recently proved by Poto\v{c}nik and Po\v{z}ar \cite{PP} that there 
are exactly two $\frac{1}{2}$-transitive graphs of this order
and no such graphs with fewer vertices. Links between the stabiliser orders $|G_x|$ and the graph structure, have been studied in 
\cite{MN,PV,SV}. In particular, the information in \cite{SV} was sufficiently powerful to enable    
Poto\v{c}nik, Spiga and Verret~\cite{PSV} to classify all members of $\calF(4)$ 
with up to $1,000$ vertices.  It would be worth exploring realisations of concentric groups as vertex stabilisers in
basic pairs $(\Ga,G)\in\calF(4)$.

\section{\texorpdfstring{$G$}{G}-oriented graphs and their normal quotients}

For fundamental graph theoretic concepts please refer to the book \cite{GR}. 

\subsection{\texorpdfstring{$G$}{G}-oriented edge transitive graphs
}\label{sub-orbitalgraph} 

Suppose that $G\leq \Sym(X)$ is a transitive permutation group 
on $X$, and that $\Delta\subset X\times X$ is a non-diagonal $G$-orbit such that the associated graph 
$\GD$ is $G$-oriented and connected. As mentioned in the introduction $\GD$ admits $G$ as a 
vertex-transitive and edge-transitive group of automorphisms, and each edge $\{x,y\}$ is
oriented from $x$ to $y$ if and only if $(x,y)\in\Delta$.  Let 
$\calF$ denote the set of all such graph-group pairs $(\GD,G)$. 

The set $\Delta^*:=
\{(y,x) | (x,y)\in\Delta\}$ is also a $G$-orbit and is disjoint from $\Delta$ since $\GD$ 
is $G$-oriented. Moreover the graph $\mathcal{G}(\Delta^*)$ is $G$-oriented and has the same 
underlying undirected graph as $\GD$ with each edge oriented in the opposite direction. The 
two $G$-oriented graphs $\GD$ and $\mathcal{G}(\Delta^*)$ may, or may not, be isomorphic as oriented 
graphs: they are ismorphic  if and only if there is an automorphism $h$ of 
the underlying graph which maps some vertex-pair in $\Delta$ to a pair in $\Delta^*$.
Necessarily $h\not\in G$ since $G$ fixes $\Delta$ setwise, and such an automorphism exists 
if and only if $\GD$ is arc-transitive relative to a group larger than $G$. Note that it is not always 
possible to choose the automorphism $h$ to interchange the $G$-orbits $\Delta$ and $\Delta^*$, see 
\cite[Section 6]{CPS}, where it is claimed that the smallest graph $\Ga$ for which 
this is the case has $21$ vertices. We checked the possibilities and confirmed that 
there exists a unique $4$-valent, arc-transitive graph $\Ga$  with 21 vertices. It has automorphism 
group $\PGL(2,7)$ and dihedral vertex stabilisers of order $16$ (not of order 8 as claimed in \cite{CPS}).
There is a subgroup $G$ (a Frobenius group of order $42$) such that $(\Ga,G)\in\calF(4)$, and no automorphism of $\Ga$ 
interchanges the two $G$-orbits on arcs.

It follows from \cite{dgh,sab} that every graph or oriented graph admitting
$G$ as an edge-transitive and vertex-transitive group of automorphisms arises as
$\GD$ for some non-diagonal $G$-orbit $\Delta$ on ordered vertex-pairs. 
We note also that, by a result of Sims \cite[Proposition 3.1]{sims}, a $G$-oriented 
graph $\GD$ is strongly connected, in the sense that for every pair of distinct 
vertices there is an oriented path from the first to the 
second, if and only if the underlying undirected graph is connected (each pair 
of vertices is joined by a path in the graph with no restriction on the orientation of the edges 
in the path). 
  
For a positive integer $k$, let $\calAG(k)$ denote the set of graph--group pairs 
$(\Sigma,H)$ such that $\Sigma$ is a connected undirected graph of valency 
$k$, and $H\leq\Aut(\Sigma)$ acts arc-transitively.   Also let $\calAG:=\cup_{k\geq1}\calAG(k)$, and let $\calF:=\cup_{m \mathrm{even}}\calF(m)$,
where as in the introduction $\calF(m)$ is the set of pairs $(\Ga,G)$ such that $\Ga$ is a connected $G$-oriented graph of valency 
$m$, and $G\leq\Aut(\Ga)$ acts transitively on vertices and edges.

\subsection{Normal quotients of \texorpdfstring{$G$}{G}-oriented graphs}\label{sec-nquot}

Let $(\Ga,G)\in\calF$ with vertex set $X$ and of even valency $m\geq 4$, so $\Ga$ is connected and $\Ga=\GD$ for some non-diagonal $G$-orbit $\Delta$ in $X\times X$ such that $\Delta\ne\Delta^*$. Let $N$ be a normal subgroup of $G$, and recall the definition of the normal quotient graph $\Ga_N$ given 
in the introduction. Normal quotients were studied in 1989 by the fourth author 
\cite{P89} focussing on the special 
subfamily of $\calF$ consisting of pairs $(\Ga,G)$ such that 
$G$ is transitive on directed paths of length $2$ in $\Ga$
(see \cite[Section 3]{P89}, especially Lemma 3.2 and Theorem 3.3). Proposition~\ref{prop:nquot} is a generalisation of those results.

Since the $N$-orbits form a $G$-invariant 
partition of $X$ (see for example \cite[Lemma 10.1]{montreal}), and 
since $G$ is transitive on $X$ and on $\Delta$, it follows from the 
definitions of $\Ga=\GD$ and $\Ga_N$ 
that $G$ induces a group $\ov{G}$ of automorphisms of $\Ga_N$ which 
is transitive on both vertices and edges.
One possibility is that $N$ has only one orbit on vertices
(for example, if $N=G$) and then $\Ga_N=K_1$ 
consists of a single vertex. In all other cases
$\Ga_N$ contains edges since $\Ga$ is connected, and it is
fairly easy to see that $\Ga_N$ must itself be connected.
It is possible that $\Ga_N$ inherits an
orientation from the $G$-orientation of $\Ga$, namely if, for some 
(and hence every) edge $\{B,C\}$ of $\Ga_N$, either 
all of the edges $\{x,y\}$ of $\Ga$ with $x\in B$ and $y\in C$ are such that $(x,y)\in\Delta$,
or all of these edges have $(x,y)\in\Delta^*$. In this case we prove in Proposition~\ref{prop:nquot} that $(\Ga_N,\ov{G})\in\calF$.
If $\Ga_N$ does not have this property, then  we show that every edge $\{B,C\}$ 
of $\Ga_N$ has $\Ga$-edges `in both directions' between vertices of $B$ and $C$, and $\ov{G}$ is arc-transitive
on  $\Ga_N$.

\begin{proposition}\label{prop:nquot}
Let $(\Ga,G)\in\calF(m)$ with vertex set $X$ and $m\geq4$, 
let $N$ be a normal subgroup of $G$. Then $\Ga_N$ is connected, $G$ induces a permutation group $\ov{G}$ on the set of $N$-orbits in $X$, 
and either $(\Ga_N,\ov{G})=(K_1,1)$,  
or one of the following holds.
\begin{enumerate}
\item[(a)] $(\Ga_N,\ov{G})\in\calF(k)$ and $\Ga$ is a $G$-normal $(m/k)$-multicover of $\Ga_N$, for some even divisor $k$ of $m$; or
\item[(b)] $(\Ga_N,\ov{G})\in\calAG(k)$ and $\Ga$ is a $G$-normal $(m/k)$-multicover of $\Ga_N$, 
for some divisor $k$ of $m$ with $m/k$ even (so $1\leq k \leq m/2$).
\end{enumerate}
\end{proposition}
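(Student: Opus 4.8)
The plan is to study, for the fixed normal subgroup $N$, the bipartite structure of $\Ga$ between pairs of $N$-orbits and to read the four outcomes off two successive dichotomies. First I would record that the $N$-orbits form a $G$-invariant partition of $X$ by \cite[Lemma 10.1]{montreal}; since $G$ is transitive on $X$ it permutes these orbits transitively, so they share a common size, and the induced group $\ov G$ on the set of orbits is vertex- and edge-transitive on $\Ga_N$ (inheriting these properties from the action of $G$ on $\Ga$). Connectivity of $\Ga_N$ is immediate, since projecting an $\Ga$-path from $x\in B$ to $y\in C$ onto the sequence of orbits it meets yields a walk from $B$ to $C$ in $\Ga_N$.

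Next, because each $g\in G$ permutes the $N$-orbits, it preserves the distinction between edges lying inside a single orbit and edges joining two distinct orbits; these two edge-sets are $G$-invariant, so edge-transitivity empties one of them. If all edges lie within orbits then, as $\Ga$ is connected, there is a single orbit and $(\Ga_N,\ov G)=(K_1,1)$; otherwise every edge joins distinct orbits. Assuming the latter, I would fix an edge $\{B,C\}$ of $\Ga_N$: since $N$ fixes $B$ and $C$ setwise, is transitive on each, and preserves $\Delta$ (as $N\leq G$), the bipartite subgraph of $\Ga$ between $B$ and $C$ is biregular, and equal orbit sizes force a common degree $\ell$, so each vertex of $B$ has exactly $\ell$ neighbours in $C$ and conversely. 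Edge-transitivity of $\ov G$ makes $\ell$ independent of the chosen edge. Counting the $m$ neighbours of a fixed $x\in B$, all of which (by the dichotomy) lie in the $k$ orbits adjacent to $B$, with $\ell$ in each, gives $m=k\ell$; hence $\ell=m/k$, $k\mid m$, and $\Ga$ is a $G$-normal $(m/k)$-multicover of $\Ga_N$.

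For the second dichotomy I would fix $\{B,C\}$ again and ask whether all $\Ga$-edges between $B$ and $C$ are oriented the same way; edge-transitivity makes the answer uniform over $\Ga_N$. If so, $\Ga_N$ inherits an orientation that $\ov G$ preserves (because $G$ preserves $\Delta$), so $\ov G$ is not arc-transitive and $(\Ga_N,\ov G)\in\calF(k)$; sorting the orbits adjacent to $B$ into out- and in-orbits of $x$ gives $k/2$ of each, so $k$ is even, which is case (a). If instead both orientations occur, I would choose arcs $(u,v),(v',u')\in\Delta$ with $u,u'\in B$ and $v,v'\in C$; transitivity of $G$ on $\Delta$ supplies $g\in G$ with $(u,v)^g=(v',u')$, so $\ov g$ interchanges $B$ and $C$, and together with edge-transitivity this yields arc-transitivity of $\ov G$, i.e.\ $(\Ga_N,\ov G)\in\calAG(k)$. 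The same $g$ carries the out-edges from $B$ to $C$ bijectively onto those from $C$ to $B$, which, compared with the biregular counts, forces $x$ to have equally many out- and in-neighbours in $C$; hence $\ell=m/k$ is even and $1\leq k\leq m/2$, which is case (b).

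The bipartite regularity and the valency count are routine; the delicate point is the second dichotomy. The crux is building the arc-reversing element $g$ from a pair of oppositely oriented arcs, and then extracting the parity statement that $m/k$ is even in case (b). One must also check carefully that in case (a) the inherited orientation is genuinely $\ov G$-preserved, and not merely $\ov G$-invariant as an unoriented edge, so that $\ov G$ fails to be arc-transitive and the quotient lands in $\calF(k)$ rather than in $\calAG(k)$.
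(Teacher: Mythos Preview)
Your proposal is correct and follows essentially the same two-dichotomy route as the paper's proof: first distinguish $\Ga_N=K_1$ from the case where all edges cross orbits, then split on whether the edges between a pair of adjacent orbits are uni- or bi-directionally oriented, using $G$-transitivity on $\Delta$ to produce the arc-reversing element in case~(b). The only cosmetic difference is that the paper parametrises by the \emph{out}-degree $\ell=|\Delta(x)\cap C|$ (using $G_x$-transitivity on $\Delta(x)$ to get uniformity and $2\ell\mid m$ directly), whereas you parametrise by the total bipartite degree and extract the parity of $k$ (resp.\ $m/k$) by sorting adjacent orbits into out/in (resp.\ matching out- and in-edges via $g$); the counts agree and both arguments are complete.
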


\begin{remark}\label{rem:nquot}{\rm 
Consider Proposition~\ref{prop:nquot} case (b) with $k=1$. Here, 
for $x$ in an $N$-orbit $B$, say, all $m$ of the edges incident 
with $x$ have their other end point in a single second $N$-orbit, 
$C$ say, and by connectivity we find that there are just two $N$-orbits, 
so $\Ga$ is bipartite, the $N$-orbits form the bipartition, and $(\Ga_N, \ov{G})=(K_2, Z_2)$. 
}
\end{remark}

\begin{proof}
We use the notation and discussion given before the statement, so $\Ga=\GD$ 
for some $G$-orbit $\Delta\ne\Delta^*$ in $X\times X$, the quotient $\Ga_N$ is connected, and the induced group $\ov{G}$ is transitive on the vertices and edges of $\Ga_N$. 
Suppose that $\Ga_N\ne K_1$ so there are at least two $N$-orbits. Since 
$\Ga_N$ is connected, $\Ga_N$ has at least one edge $\{B,C\}$ and by 
definition there are vertices $x\in B, y\in C$, which form an edge of 
$\Ga$. Without loss of generality $(x,y)\in\Delta$. Let $\Delta(x)=\{z | 
(x,z)\in\Delta\}$, and let  $\ell_C(x)=|\Delta(x)\cap C|$.
Since $y\in \Delta(x)\cap C$, we have $\ell_C(x)>0$. Moreover, since 
$G$ is transitive on $\Delta$ it follows that $G_x$ is transitive on 
$\Delta(x)$,  and hence $\ell_{C'}(x)=\ell_C(x)=\ell$, say, for all 
$N$-orbits $C'$ that meet $\Delta(x)$ nontrivially. Thus $\ell$ divides 
$|\Delta(x)|$, and as the valency $m$ of $\Ga$ is equal to $|\Delta(x)|
+|\Delta^*(x)|=2|\Delta(x)|$, we have $2\ell \mid m$. Further, since $N$ 
is transitive on $B$ and fixes $C$ setwise, it follows that $\ell=\ell_C(x)$ 
is independent of the vertex $x$ of $B$ and hence the number of pairs 
from $B\times C$ lying in $\Delta$ is equal to $|B|\ell=|C|\ell$. Since 
also $N$ is transitive on $C$ and fixes $B$ setwise, we find that 
$\ell = |\Delta^*(z)\cap B|$ for each $z\in C$. 

Suppose first that there are no edges of $\Ga$ between $B$ 
and $C$ that are oriented `from $C$ to $B$', that is to say, 
$\Delta$ contains no pairs from $C\times B$. Since $\ov{G}$ is 
transitive on the edges of $\Ga_N$, this is true for all 
$\Ga_N$-edges, and hence $\Ga_N$ 
admits a $\ov{G}$-invariant orientation from the $G$-orientation of $\Ga$,
that is $(\Ga_N,\ov{G})\in\calF$. Moreover there are 
exactly $|B|\ell$ edges of $\Ga$ joining vertices in $B$ to 
vertices in $C$, and so $\Ga$ is a $G$-normal $\ell$-multicover of $\Ga_N$.      
Finally, since there are exactly $|B|m$ edges of $\Ga$ with one end in $B$,
it follows that the valency $k$ of $\Ga_N$ is $k=|B|m/(|B|\ell)=m/\ell$,
which is even since $2\ell \mid m$. Thus (a) holds.

Now suppose that there is also at least one $\Ga$-edge
from $C$ to $B$, that is, there is a pair $(c,b)\in\Delta$
with $c\in C, b\in B$. Since $G$ is transitive on $\Delta$, 
some element $g\in G$ maps $(x,y)$ to $(c,b)$, and hence
$g$ interchanges $B$ and $C$. This means that $\ov{G}$
acts arc-transitively on $\Ga_N$, so $(\Ga_N,\ov{G})\in\calAG$.
Further, $g$ maps the set $\Delta\cap (B\times C)$ to 
$\Delta\cap(C\times B)$, and it follows that each vertex $x$ of
$B$ is joined in $\Ga$ to $\ell$ vertices of $\Delta^*(x)\cap C$.
Thus there are $2\ell|B|$ edges of $\Ga$ between vertices of 
$B$ and $C$ (namely $\ell|B|$ in each orientation), so $\Ga$ is 
a $G$-normal $(2\ell)$-multicover of $\Ga_N$, and the valency $k$ of 
$\Ga_N$ is $k=m/2\ell\leq m/2$. So (b) holds.
\end{proof}

%


We derive Theorem~\ref{thm:nquot} from Proposition~\ref{prop:nquot}.

\medskip\noindent
\textbf{Proof of Theorem~\ref{thm:nquot}}\quad 
If $N$ is transitive on $X$ then $\Ga_N=K_1, \ov{G}=1$, and line 1 of Table~\ref{tbl:nquot} holds.
So suppose that $N$ is intransitive. Then $\Ga_N\ne K_1$ and so (a) or (b) of 
Proposition~\ref{prop:nquot} holds with $m=4$. In particular $\Ga_N$ is connected.
Suppose first that case (a) holds with $k=4$, so that $\Ga$ is a $G$-normal 
cover of $\Ga_N$. Since $N$
fixes each vertex of $\Ga_N$ setwise, $\ov{G}=G/K$ for some normal subgroup 
$K$ of $G$ containing $N$. Moreover the $K$-orbits are the same as the $N$-orbits. 
For each vertex $x$, the four vertices adjacent to $x$ in $\Gamma$ lie in four distinct
$N$-orbits, and hence the stabiliser $K_x$ fixes pointwise the vertices adjacent to $x$,
and by the connectivity of $\Ga$ it follows that $K_x=1$.  Thus $K$ is semiregular
on vertices,  since this holds for all vertices $x$. Also $|K|$ is
equal to the size of the $N$-orbit containing $x$, and this is a divisor of $|N|$. 
Hence $K=N$ and thus $\ov{G}=G/N$, and Theorem~\ref{thm:nquot}~(i) holds.

If case (a) of Proposition~\ref{prop:nquot} holds with $k=2$, then $\Ga$ is a $G$-normal $2$-multicover of $\Ga_N$,
and as $\Ga_N$ is an oriented connected $2$-valent graph it follows that $\Ga_N= \Cdr r$ 
and $\ov{G}=Z_r$, for some $r\geq3$, as in line 4 of Table~\ref{tbl:nquot}.

Suppose now that case (b) of Proposition~\ref{prop:nquot} holds. If $k=1$ then, as discussed
in Remark~\ref{rem:nquot}, $\Ga_N=K_2$, $\ov{G}=Z_2$, $\Ga$ is bipartite, 
and the $N$-orbits form the bipartition, as in line 2 of Table~\ref{tbl:nquot}.
If $k>1$ then the only possibility is $k=2$, and then $\Ga$ is a $G$-normal
$2$-multicover of $\Ga_N$, and $\Ga_N$ is a connected
$\ov{G}$-arc-transitive graph of valency $2$, so $\Ga_N=C_r$ and $\ov{G}=D_{2r}$,
for some $r\geq3$, as in line 3 of Table~\ref{tbl:nquot}.
\qed

\medskip

\subsection{Basic pairs in \texorpdfstring{$\calF(4)$}{OG(4)}}\label{sec-basic}

It is not difficult to see that every non-basic $(\Ga,G)\in\calF(4)$
has at least one basic normal quotient.

\begin{lemma}\label{lem:basic1}
For each non-basic $(\Ga,G)\in\calF(4)$, $\Ga$ is a $G$-normal cover of 
at least one quotient $\Ga_N$, with $(\Ga_N, G/N)\in\calF(4)$  basic.
\end{lemma}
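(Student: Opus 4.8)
The plan is to locate, among all normal subgroups of $G$ that yield a genuine valency-$4$ normal quotient, one of maximal order, and then to show that the quotient it determines is basic. Throughout I will use that a pair in $\calF(4)$ has valency exactly $4$, whereas every degenerate quotient listed in Table~\ref{tbl:nquot} has valency at most $2$; this is what makes case~(i) of Theorem~\ref{thm:nquot} the only option whenever a normal quotient happens to lie in $\calF(4)$.

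First I would record a \emph{transitivity of normal quotients}: if $N\vartriangleleft G$ and $M\vartriangleleft G$ with $N\le M$, then the image $\ov M=M/N$ is a normal subgroup of $\ov G=G/N$, the $\ov M$-orbits on the vertex set of $\Ga_N$ (namely the $N$-orbits on $X$) are exactly the $M$-orbits on $X$, and hence $(\Ga_N)_{\ov M}=\Ga_M$ as graphs, with the induced action of $\ov G/\ov M\cong G/M$ agreeing with the natural $G/M$-action. This is a direct unwinding of the definition of the normal quotient graph in Subsection~\ref{sec-nquot}: a pair of $M$-orbits is joined in $\Ga_M$ precisely when some $\Ga$-edge runs between them, which in turn happens precisely when some $\Ga_N$-edge runs between the corresponding blocks of $N$-orbits. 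I expect this bookkeeping, together with the standard correspondence between normal subgroups of $G/N$ and normal subgroups of $G$ containing $N$, to be the only genuinely technical point, and it is routine.

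Next, let $\mathcal N$ be the set of normal subgroups $N\vartriangleleft G$ for which conclusion~(i) of Theorem~\ref{thm:nquot} holds, that is, $(\Ga_N,G/N)\in\calF(4)$ and $\Ga$ is a $G$-normal cover of $\Ga_N$. Since $(\Ga,G)$ is non-basic, some nontrivial normal subgroup of $G$ has a non-degenerate quotient; by Theorem~\ref{thm:nquot} such a quotient must fall under case~(i), so $\mathcal N$ contains a nontrivial member. I would then choose $N\in\mathcal N$ of maximal order and set $(Y,H)=(\Ga_N,G/N)\in\calF(4)$, noting that $N\ne1$.

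Finally I would verify that $(Y,H)$ is basic by contradiction. If it were not, some nontrivial $\ov M\vartriangleleft H$ would yield a non-degenerate quotient, which by Theorem~\ref{thm:nquot} applied to $(Y,H)$ means $(Y_{\ov M},H/\ov M)\in\calF(4)$. Lifting $\ov M$ through the correspondence above gives $M\vartriangleleft G$ with $N<M$, and the transitivity statement identifies $(Y_{\ov M},H/\ov M)$ with $(\Ga_M,G/M)$, so $(\Ga_M,G/M)\in\calF(4)$. As this quotient has valency $4$, it cannot be one of the degenerate cases of Table~\ref{tbl:nquot}, and so Theorem~\ref{thm:nquot} applied to $(\Ga,G)$ and $M$ forces case~(i); thus $M\in\mathcal N$ with $|M|>|N|$, contradicting the maximality of $N$. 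Therefore $(Y,H)=(\Ga_N,G/N)$ is basic, and, since $N\in\mathcal N$, the graph $\Ga$ is a $G$-normal cover of $\Ga_N$, which is the assertion of the lemma. The main obstacle is establishing the transitivity of normal quotients cleanly enough that the iteration collapses to a single maximal step; once that is in hand, the valency bookkeeping does the rest.
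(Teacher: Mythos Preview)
Your proposal is correct and follows essentially the same approach as the paper: both rely on the transitivity identification $(\Ga_N)_{\ov M}\cong\Ga_M$ together with the correspondence $\ov M\leftrightarrow M$, and then use finiteness of $G$ to terminate. The only cosmetic difference is that the paper iterates to build a strictly increasing chain $N_1<N_2<\cdots$ of normal subgroups until the quotient becomes basic, whereas you pick a maximal $N$ in the set $\mathcal N$ at the outset and argue by contradiction; these are equivalent packagings of the same finiteness argument, and your explicit appeal to the valency dichotomy to place $M$ back in $\mathcal N$ is a clean way to close the loop.
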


\begin{proof}
By definition of a basic pair, since $(\Ga,G)\in\calF(4)$ is non-basic,
it follows from Theorem~\ref{thm:nquot} that there is a nontrivial 
normal subgroup $N$ of $G$ such that the quotient $(\Ga_N,\ov{G})
\in\calF(4)$ and $\Ga$ is a $G$-normal cover of $\Ga_N$, where $\ov{G}=G/N$. 

If $(\Ga_N,\ov{G})\in\calF(4)$ is basic there is nothing further to 
prove, so suppose that this is not the case. Then, as in the previous paragraph,
there is a nontrivial normal  subgroup $\ov{M}$ of $\ov{G}$ such that 
the quotient $((\Ga_N)_{\ov{M}},\ov{G}/\ov{M})
\in\calF(4)$ and $\Ga_N$ is a $\ov{G}$-normal cover of $(\Ga_N)_{\ov{M}}$.

Each such subgroup $\ov{M}$ is of the form $M/N$ for a (unique)
normal subgroup $M$ of $G$ properly containing $N$.
Moreover, it follows from the definitions that 
$(\Ga_N)_{\ov{M}}\cong\Ga_M$ and the group
$\ov{G}/\ov{M}=(G/N)/(M/N)\cong G/M$ so we have a larger normal subgroup
$M$ of $G$ and a smaller normal quotient $(\Ga_M,G/M)\in\calF(4)$,
and $\Ga$ is a $G$-normal quotient of $\Ga_M$. 
 
Since the group $G$ is finite this process can be applied a finite 
number of times, yielding a strictly increasing chain of normal 
subgroups of $G$, namely $1<N_1<N_2<\dots <N_s$, and a corresponding sequence of 
$G$-normal quotients $(\Ga_{N_1},G/N_1)$, $(\Ga_{N_2},G/N_2),\dots,
(\Ga_{N_s},G/N_s)\in\calF(4)$ such that
the final pair $(\Ga_{N_s},G/N_s)$ is basic, and $\Ga$ is a $G$-normal cover of $\Ga_{N_s}$. 
\end{proof}

As mentioned in Subsection~\ref{sec-stab}, for each $(\Ga,G)\in\calF(4)$, a vertex stabiliser 
 $G_x$ is a nontrivial $2$-group, and the 
possible orders $|G_x|$ for $(\Ga,G)\in\calF(4)$ are unbounded, even in the case of basic graphs. 
This was established in \cite{Mar} for the case where $G$ is an alternating group. 
We give here a simple example of basic cycle type, and we give further examples which are basic of quasiprimitive type in Construction~\ref{con:snbigstab}.

\begin{example}\label{ex:najat}
{\rm
Let $r\geq3$ and let $\Ga$ be the graph with vertex set $X=\mathbb{Z}_r\times \mathbb{Z}_2$ 
and edges $\{(i,j),(i+1,j')\}$ for all $i\in\mathbb{Z}_r, j,j'\in\mathbb{Z}_2$, 
that is, $\Ga= C_r[2.K_1]$, the lexicographic product of $C_r$ and $2.K_1$.
We orient each edge so that $(i,j) \rightarrow (i+1,j')$.
Let $G=Z_2\wr Z_r=\{(\sigma_1,\dots,\sigma_r)\tau^\ell |0\leq \ell<r\ \mbox{and each}\ \sigma_k\in \mathbb{Z}_2 \}$, 
where $(\sigma_1,\dots,\sigma_r):(i,j)\mapsto (i, j+\sigma_i)$, and $\tau:(i,j)\mapsto (i+1,j)$.
}
\end{example}

\begin{lemma}\label{lem:najat}
Let $r, \Ga, G, X$ and the edge orientation be as in Example~$\ref{ex:najat}$. Then $(\Ga, G)\in\calF(4)$, 
and for $x\in X$, $G_x\cong Z_2^{r-1}$. Moreover, each $(\Ga,G)$ is basic of cycle type.
\end{lemma}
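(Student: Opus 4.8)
The plan is to verify the three assertions of Lemma~\ref{lem:najat} in turn: first that $(\Ga,G)\in\calF(4)$, then that $G_x\cong Z_2^{r-1}$, and finally that $(\Ga,G)$ is basic of cycle type. I will work throughout with the explicit description of $G=Z_2\wr Z_r$ acting on $X=\Z_r\times\Z_2$ given in Example~\ref{ex:najat}.

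For the first claim, I would begin by confirming that $G$ is a group of automorphisms of the oriented graph $\Ga$ preserving the orientation: the elements $(\sigma_1,\dots,\sigma_r)$ fix the first coordinate and so send an edge $(i,j)\to(i+1,j')$ to another edge of the same form $(i,j+\sigma_i)\to(i+1,j'+\sigma_{i+1})$, while $\tau$ shifts the first coordinate and clearly maps oriented edges to oriented edges. Next I would check that $G$ is transitive on $X$ (the translation $\tau$ moves between the $r$ ``columns'' $\{i\}\times\Z_2$, and the base group $Z_2^r$ acts transitively within each column) and transitive on edges (given any two edges, use $\tau$ to align their tails' first coordinates and then a base-group element to match second coordinates). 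It then remains to observe that $\Ga$ is connected of valency $4$ — each vertex $(i,j)$ is joined to the two vertices $(i+1,*)$ and the two vertices $(i-1,*)$, with the out-edges going to $(i+1,*)$ — and that the orientation genuinely splits the four incident edges two-and-two, so $\Ga$ is $G$-oriented rather than $G$-arc-transitive. Since no element of $G$ reverses an out-edge into an in-edge (elements either fix or shift the first coordinate monotonically), $G$ is not arc-transitive, giving $(\Ga,G)\in\calF(4)$.

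For the stabiliser computation, I would fix $x=(0,0)$ and compute $G_x$ directly. An element $(\sigma_1,\dots,\sigma_r)\tau^\ell$ fixes $(0,0)$ iff it fixes the first coordinate, forcing $\ell=0$, and then fixes the second coordinate, forcing $\sigma_1=0$; the remaining $\sigma_2,\dots,\sigma_r$ are free. Hence $G_x$ consists of the base-group elements with first entry $0$, giving $G_x\cong Z_2^{r-1}$. As a sanity check, $|G|=2^r\cdot r$ and $|X|=2r$, so $|G_x|=|G|/|X|=2^{r-1}$, consistent with transitivity.

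The substantive part, and the main obstacle, is the last claim that $(\Ga,G)$ is basic of cycle type. The strategy is to identify the base group $N:=Z_2^r$ as a normal subgroup of $G$ whose orbits are exactly the columns $\{i\}\times\Z_2$, so that $\Ga_N\cong C_r$ with $r\geq3$, realising line~3 or~4 of Table~\ref{tbl:nquot} and placing $(\Ga,G)$ in the cycle row of Table~\ref{tbl:basic}; here $\ov G=G/N\cong Z_r$ acts as $Z_r$ on $C_r$, so by the remarks after Theorem~\ref{thm:nquot} the quotient cycle is oriented. To prove the pair is \emph{basic}, I must show every nontrivial normal quotient is degenerate, i.e.\ no nontrivial $M\vartriangleleft G$ yields $(\Ga_M,\ov G)\in\calF(4)$ as in case~(i) of Theorem~\ref{thm:nquot}. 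The cleanest route is to enumerate the normal subgroups $M$ of $G=Z_2\wr Z_r$ and check each gives a degenerate quotient. Any such $M$ either lies in $N$ (in which case, since $Z_r$ acts by cyclically permuting the $r$ coordinates of $N=Z_2^r$, the only $Z_r$-invariant subgroups are controlled by the orbit structure of $Z_r$ on coordinates, and for those that are intransitive-on-columns one checks $\Ga_M$ still collapses columns or reduces to a cycle) or projects onto a nontrivial subgroup of $Z_r$ (forcing $M$ to contain the diagonal and be vertex-transitive, giving $\Ga_M=K_1$). I would organise this as a short case analysis: if $M\le N$ is nontrivial, its orbits partition each column, and connectivity plus $Z_r$-invariance forces $\Ga_M$ to be either $K_1$, $K_2$, or a cycle $C_{r'}$; if $M\not\le N$, then the image of $M$ in $Z_r$ is nontrivial, so $M$ contains some $\tau^d$-type element that acts transitively on columns, whence $M$ is transitive on $X$ and $\Ga_M=K_1$. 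The delicate point requiring care is verifying that in the sub-case $M\le N$ no quotient lands back in $\calF(4)$ of valency $4$ — i.e.\ ruling out a genuine $G$-normal \emph{cover}; this follows because collapsing any $Z_2$ within a column merges the two vertices of that column, which by the edge structure forces a valency drop, so only degenerate (cycle, $K_1$, or $K_2$) quotients survive. Assembling these cases shows all nontrivial normal quotients are degenerate and at least one is a cycle $C_r$, so $(\Ga,G)$ is basic of cycle type.
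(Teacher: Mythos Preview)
Your treatment of the first two assertions matches the paper's, and your overall strategy for the third---exhibit the base group $N=Z_2^r$ to get a cycle quotient, then show every nontrivial normal quotient is degenerate---is sound. However, the case $M\not\le N$ contains a genuine error. You assert that if $M$ projects onto a nontrivial subgroup of $Z_r$ then ``$M$ contains some $\tau^d$-type element that acts transitively on columns, whence $M$ is transitive on $X$ and $\Ga_M=K_1$''. This fails whenever $r$ is composite: for instance with $r=4$, the subgroup $M=N\langle\tau^2\rangle$ is normal in $G$ (its image in $Z_4$ is the unique subgroup of order~$2$), but $\tau^2$ has two orbits on columns, so $M$ has two orbits on $X$ and $\Ga_M=K_2$, not $K_1$. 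The claim about ``containing the diagonal'' does not rescue this.

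The paper sidesteps the case split with a cleaner device: it first shows that \emph{every} minimal normal subgroup of $G$ lies inside $N$, using the observation that if a minimal normal $L$ satisfied $L\cap N=1$ then $[L,N]\le L\cap N=1$, forcing $L\le C_G(N)=N$, a contradiction. Since $N$-orbits have size~$2$, any nontrivial normal subgroup $L\le N$ has orbits equal to the columns, so $\Ga_L=C_r$. Now an arbitrary nontrivial normal $M$ contains some minimal normal subgroup $L\le N$, hence the $M$-orbits are unions of columns and $\Ga_M$ is a quotient of $\Ga_N=C_r$---automatically a cycle, $K_2$, or $K_1$. Your argument can be repaired along similar lines by first proving $M\cap N\ne1$ (same centraliser trick) and then invoking your $M\le N$ analysis for $M\cap N$; but as written, the $M\not\le N$ case does not go through.
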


\begin{proof}
By definition the graph $\Ga$ is a connected graph of valency 4.
It is straightforward to show that $G$ acts as a group of automorphisms of $\Ga$
which preserves the orientation, and is transitive on both vertices and edges, so
$(\Ga,G)\in\calF(4)$. As $G$ is vertex-transitive, without loss of generality we choose 
$x=(0,0)\in X$. The stabiliser is $G_x=\{(0,\sigma_1,\dots,\sigma_r)\mid 
\mbox{each}\ \sigma_k\in \mathbb{Z}_2\}\cong Z_2^{r-1}$. 

First we consider the quotient $\Ga_B$ modulo the `base group' 
\[
B=\{(\sigma_1,\dots,\sigma_r) |\ \mbox{each}\ 
\sigma_k\in \mathbb{Z}_2\}=Z_2^r. 
\]
This group is normal in $G$ and its orbits are
the sets $Y_i:=\{(i,0), (i,1)\}$, for $i\in\mathbb{Z}_r$. From the definition of
the edge orientation of $\Ga$, it follows that $\Ga_B$ is the cycle
$C_r$, and is $G/B$-oriented.

Next consider a minimal normal subgroup  $N$ of $G$. We claim that $N$ is contained
in $B$.  If this is not the case then $N\cap B=1$ and so $N\leq C_G(B)$.
However $C_G(B)=B$ and we have a contradiction. 
Thus $N\leq B$. Since all $B$-orbits have size 2, and since all $N$-orbits must have equal size
(because $N$ is normal in $G$), it follows that $N$ and $B$ have the same orbits, and hence
$\Ga_N=\Ga_B=C_r$. 

Now let $M$ be an arbitrary normal subgroup of $G$. Then $M$ contains a minimal normal
subgroup, and it follows from the previous paragraph that the $M$-orbits must be unions of
$B$-orbits. Thus the quotient $\Ga_M$ is isomorphic to a quotient of $\Ga_B$ 
(possibly equal to $\Ga_B$). Hence $\Ga_M$ is a cycle, or possibly $K_2$ or $K_1$, 
and so $(\Ga,G)$ is basic of cycle type.
\end{proof}

\section{Cayley graphs in \texorpdfstring{$\calF(4)$}{OG(4)}}\label{sec:cayley}

Many of the $\frac{1}{2}$-transitive graphs of valency four in the 
literature are Cayley graphs, and we need the following information about them for 
our analysis. 
\begin{enumerate}
 \item[(i)] For a group $N$ and inverse-closed subset $S$ of $N\setminus\{1_N\}$, 
the \emph{Cayley graph} ${\rm Cay}(N,S)$ has vertex set $N$, and edges $\{x,y\}$ such that $xy^{-1}\in S$.  
\item[(ii)] ${\rm Cay}(N,S)$ admits $N$ in its right multiplication action ($g:x\mapsto xg$ for $x,g\in N$)
as a subgroup of automorphisms that is \emph{regular} on vertices 
(that is, $N$ is 
transitive, and only the identity fixes a point).
\item[(iii)]  ${\rm Cay}(N,S)$ is connected
if and only if $S$ generates $N$.
\item[(iv)]  If $S=S_0\cup S_0^{-1}$ with $S_0\cap S_0^{-1}=\emptyset$, then for any pair 
$x, y$ of distinct vertices, at most one of $xy^{-1}$ and $yx^{-1}$ 
lies in $S_0$, and we may define an orientation on each  edge $\{x,y\}$ by 
$x\rightarrow y$ if and only if $yx^{-1}\in S_0$. 

\end{enumerate}

\begin{remark}\label{rem:rns}{\rm 
To explain how Cayley graphs arise naturally in our investigation,
we remark the following: if $G\leq\Aut(\Ga)$ for a 
graph $\Ga$ with vertex set $X$, and if $N$ 
is a normal subgroup of $G$ that is regular on $X$, then
we may identify $X$ with $N$ and 
there exists $S\subset N$ such that $\Ga = {\rm Cay}(N,S)$,
and $G$ is a semidirect product  $G=N\rtimes H$, where $N$ acts on $X=N$
by right multiplication, and $H\leq \Aut(N)$ acts naturally
as automorphisms fixing $S$ setwise. Moreover, $H$ 
is the stabiliser in $G$ of the vertex $1_N$, and the only element of $H$
fixing $S$ pointwise is the identity, since $S$ generates $N$.
See, for example, \cite[Section 1.7]{Ca}.
}
\end{remark}

\begin{lemma}\label{lem:cay}
Let $(\Ga,G)\in\calF(4)$, and let $N$ be 
a regular normal subgroup of $G$. Then $\Ga={\rm Cay}(N,S)$,
with $S=S_0\cup S_0^{-1}$ and $S_0=\{a,b\}$ such that $N=\langle S\rangle$ and  
$1_N\not\in\{a^2, b^2, ab\}$. Also $G=N\rtimes H$, where 
$H\leq \Aut(N)$, $H$ leaves $S_0$ invariant, and $H\cong Z_2$ interchanges 
$a$ and $b$.  
\end{lemma}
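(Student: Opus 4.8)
The plan is to invoke Remark~\ref{rem:rns} to obtain the Cayley structure and the semidirect decomposition, and then to pin down the out-set $S_0$ and the action of $H$ using the $G$-orientation together with the transitivity of $G$ on the arc-set $\Delta$. First I would identify the vertex set $X$ with $N$ as in Remark~\ref{rem:rns}, so that $\Ga=\mathrm{Cay}(N,S)$ for an inverse-closed $S\subseteq N\setminus\{1_N\}$, with $G=N\rtimes H$ and $H=G_{1_N}\leq\Aut(N)$ leaving $S$ setwise invariant; connectivity of $\Ga$ gives $N=\langle S\rangle$, and by the last assertion of Remark~\ref{rem:rns} the only element of $H$ fixing $S$ pointwise is the identity. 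Since $\Ga$ has valency $4$, we have $|S|=4$.

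Next I would extract $S_0$ from the orientation. Because $N\leq G$ preserves the $G$-orientation and acts by right multiplication, which fixes the differences $yx^{-1}$ defining edges, the orientation is $N$-invariant and hence determined by the out-neighbours of the base vertex $1_N$. Set $S_0:=\{s\in S\mid(1_N,s)\in\Delta\}$; translating an in-arc $(s,1_N)\in\Delta$ by $s^{-1}$ sends it to $(1_N,s^{-1})\in\Delta$, showing that the in-neighbours of $1_N$ are exactly $S_0^{-1}$, so that $S=S_0\cup S_0^{-1}$. Since $\Ga$ is $G$-oriented we have $\Delta\cap\Delta^*=\emptyset$, which means no neighbour of $1_N$ is simultaneously an out- and an in-neighbour, forcing $S_0\cap S_0^{-1}=\emptyset$ and hence $|S_0|=2$; write $S_0=\{a,b\}$. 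The conditions $1_N\notin\{a^2,b^2,ab\}$ then fall out directly: $a^2=1_N$ (resp.\ $b^2=1_N$) would place $a$ (resp.\ $b$) in $S_0\cap S_0^{-1}$, while $ab=1_N$ would give $b=a^{-1}$ and again $a\in S_0\cap S_0^{-1}$, each contradicting disjointness.

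Finally I would analyse $H$. As $H$ fixes $1_N$ and preserves the orientation, it permutes the out-neighbours of $1_N$, so $H$ leaves $S_0$ invariant and we get a homomorphism $H\to\Sym(S_0)\cong Z_2$. Transitivity of $G$ on $\Delta$ makes $H=G_{1_N}$ transitive on the two out-arcs $(1_N,a),(1_N,b)$ (given $g\in G$ with $(1_N,a)^g=(1_N,b)$, the condition $1_N^g=1_N$ forces $g\in H$), so this homomorphism is onto and some element of $H$ interchanges $a$ and $b$. Its kernel fixes both $a$ and $b$, hence fixes $S=\{a,b,a^{-1},b^{-1}\}$ pointwise, and so is trivial by the faithfulness recorded above; therefore $H\cong Z_2$ and its nontrivial element interchanges $a$ and $b$. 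I expect the only genuinely delicate point to be the orientation bookkeeping in the middle paragraph, namely arguing via $N$-invariance that the in-neighbours of $1_N$ are precisely $S_0^{-1}$ and that $G$-orientedness forces $S_0\cap S_0^{-1}=\emptyset$; the remaining assertions are then short formal deductions.
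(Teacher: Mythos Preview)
Your proof is correct and follows essentially the same route as the paper: invoke Remark~\ref{rem:rns} for the Cayley and semidirect structure, identify $S_0$ as the out-neighbours of $1_N$, use $N$-translation to see that the in-neighbours are $S_0^{-1}$ and the $G$-orientation to force $S_0\cap S_0^{-1}=\emptyset$, and finish with edge-transitivity to get the $H$-action on $S_0$. If anything, your justification of $H\cong Z_2$ via the faithful map $H\to\Sym(S_0)$ is more explicit than the paper's one-line appeal to edge-transitivity.
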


\begin{proof}
By Remark~\ref{rem:rns}, since $N$ is  
a regular normal subgroup of $G$, we have $\Ga={\rm Cay}(N,S)$,
and $G=N\rtimes H$, where $H$ is a subgroup 
of $\Aut(N)$ which leaves $S$ invariant, $H$ is the stabiliser of $1_N$,
and $H$ acts faithfully on $S$. Also, since $\Ga$ is connected, $N=\langle S\rangle$. 
Moreover, since $(\Ga,G)\in\calF(4)$, there are two distinct elements of 
$N$, say $a$ and $b$, such that $1_N\rightarrow a$ and $1_N\rightarrow b$.
Let $S_0=\{a,b\}$. Then by the definition of adjacency we have
$S_0\subset S$. The images of these oriented edges under the actions 
of $a^{-1}, b^{-1}\in N$ are  $a^{-1}\rightarrow 1_N$ and $b^{-1}\rightarrow 1_N$,
respectively. Thus also $S_0^{-1}=\{a^{-1},b^{-1}\}\subset S$. Since $1_N$ has exactly 
two `out-neighbours' and two `in-neighbours', it follows that 
$S=S_0\cup S_o^{-1}$ and $S_0\cap S_0^{-1}=\emptyset$. In particular
$a^{-1}, b^{-1}\not\in S_0$ and hence $1_N\not\in\{a^2, b^2, ab\}$. 
Since $G$ preserves the edge orientation and is edge-transitive,  
the stabiliser $H$ of $1_N$ must interchange $a$ and $b$, so $H\cong Z_2$.  
\end{proof}

In the case where $N$ is a regular minimal normal subgroup of $G$, 
the conditions of Lemma~\ref{lem:cay} are sufficient to exclude the possibility 
that $N$ is abelian (Lemma~\ref{lem:ha}), and to
allow us to construct examples with $N$ nonabelian 
(Constructions~\ref{ex:simpleCayley} and~\ref{ex:tw}).

\begin{lemma}\label{lem:ha}
If  $(\Ga,G)\in\calF(4)$ with  $\Ga={\rm Cay}(N,S)$ for 
a minimal normal subgroup  $N$ of $G$, then $N$ is not  abelian.
\end{lemma}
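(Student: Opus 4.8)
The plan is to argue by contradiction, exploiting the concrete Cayley description supplied by Lemma~\ref{lem:cay}. So suppose that $N$ is abelian. Since $N$ is a minimal normal subgroup of $G$ and is abelian, it is elementary abelian, say $N\cong Z_p^d$ for a prime $p$, and I would regard $N$ as a vector space over $\F_p$. By Lemma~\ref{lem:cay} we have $G=N\rtimes H$ with $H=\langle h\rangle\cong Z_2$, where $S_0=\{a,b\}$, $N=\langle a,b\rangle$, the involution $h$ interchanges $a$ and $b$, and crucially $1_N\notin\{a^2,b^2,ab\}$. First I would record that, because $N$ is abelian, conjugation by $N$ is trivial, so a subgroup of $N$ is normal in $G$ precisely when it is $h$-invariant; thus the minimality of $N$ says exactly that $h$ admits no nontrivial proper invariant subspace on $N$.

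The key step is to produce a nonidentity $h$-fixed element and derive a contradiction from this minimality. Consider the fixed-point subgroup $W=\{n\in N\mid n^h=n\}$. Since $N$ is abelian, $W$ is a subgroup of $N$, it is centralised by $N$, and it is fixed setwise by $h$, hence $W$ is normal in $G$. Now the element $ab$ lies in $W$: using $a^h=b$, $b^h=a$, and commutativity, $(ab)^h=a^h b^h=ba=ab$. Moreover $ab\neq 1_N$ by the condition $1_N\notin\{ab\}$ from Lemma~\ref{lem:cay}, so $W\neq 1$. By minimality of $N$ we conclude $W=N$, which means that $h$ fixes $N$ pointwise, i.e. $h=1$ in $\Aut(N)$. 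This contradicts $a^h=b\neq a$ (the out-neighbours $a,b$ of $1_N$ are distinct, since $H\cong Z_2$ interchanges them), and completes the argument.

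The substance of the proof is entirely contained in the single observation that $ab$ is an $h$-fixed nonidentity element: that it is fixed follows from $h$ swapping $a$ and $b$ together with commutativity, while its nontriviality $ab\neq 1_N$ is precisely the orientation constraint $S_0\cap S_0^{-1}=\emptyset$ recorded in Lemma~\ref{lem:cay}. I therefore expect the only points needing care to be the two structural reductions: that an abelian minimal normal subgroup is elementary abelian, and that normality in $G$ of a subgroup of the abelian group $N$ is equivalent to $h$-invariance (so that minimality becomes the absence of a proper nontrivial $h$-fixed subgroup). Both are routine, so there is no real obstacle; the whole role of the hypotheses is simply to guarantee that $ab$ is a genuine nonidentity fixed point, which is exactly what excludes the abelian case.
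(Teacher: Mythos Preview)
Your proof is correct and follows essentially the same idea as the paper's: both hinge on the observation that $ab$ is a nontrivial $h$-fixed element of the abelian group $N$, and then use minimality of $N$ to derive a contradiction. The only difference is in the final wrap-up: the paper takes the cyclic subgroup $\langle ab\rangle$, concludes $N=\langle ab\rangle\cong Z_p$, and then notes that the unique involution in $\Aut(Z_p)$ is inversion, forcing $b=a^{-1}$; you instead take the full fixed-point subgroup $W=C_N(h)$, conclude $W=N$, and hence $h$ acts trivially, contradicting $a^h=b\neq a$. Your route is marginally cleaner since it bypasses the separate analysis of the $d=1$ case, but the two arguments are otherwise the same.
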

 
\begin{proof}
Suppose that $N$ is an abelian minimal normal subgroup of $G$. Then
$N=Z_p^d$ for some prime $p$ and integer $d\geq1$. By Lemma~\ref{lem:cay},
$S=S_0\cup S_0^{-1}$, $S_0=\{a,b\}$, and $G=N\rtimes H$ with $H=Z_2$ interchanging $a$ and $b$. 
This implies, since $N$ is abelian, that $H$ fixes $ab=ba$,
and by Lemma~\ref{lem:cay}, $ab\ne 1$. Since $N$ is abelian the subgroup
$\langle ab\rangle$ is therefore normal in $G$, and since $N$ is a minimal 
normal subgroup, this means that
$N= \langle ab\rangle$, so $d=1$. In this case, however, the only subgroup
$H$ of $\Aut(N)$ of order $2$ inverts $N$, and hence  
$b=a^{-1}$, which is a contradiction. 
\end{proof} 

\begin{construction}\label{ex:simpleCayley}{\rm
Let $N$ be a nonabelian simple group and $a\in N, \sigma\in\Aut(N)$,
such that $\sigma^2=1$, $S_0:=\{a,a^\sigma\}$ generates $N$, and let $S=S_0\cup S_0^{-1}$.
Then for $\Ga={\rm Cay}(N,S)$ and $G=N\rtimes \langle \sigma\rangle$,
we have $(\Ga,G)\in\calF(4)$.
}
\end{construction}

\begin{lemma}\label{lem:simpleCayley}
Each $(\Ga, G)$ in Construction~$\ref{ex:simpleCayley}$
lies in  $(\Ga,G)\in\calF(4)$ and is
basic of quasiprimitive type.
\end{lemma}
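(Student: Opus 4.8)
The plan is to verify the two assertions in turn: first that $(\Ga,G)\in\calF(4)$, and then that $G$ is quasiprimitive on vertices, which by the definition of basic pairs (Table~\ref{tbl:basic}, line~1) and Theorem~\ref{thm:nquot} is exactly what ``basic of quasiprimitive type'' means.

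For membership in $\calF(4)$ I would first extract the non-degeneracy of $S$ from the hypothesis that $N$ is nonabelian simple and $S_0=\{a,a^\sigma\}$ generates $N$. Listing the four elements $a,a^\sigma,a^{-1},(a^\sigma)^{-1}$, any coincidence among them forces $N$ to be either cyclic (if $a^\sigma\in\{a,a^{-1}\}$, whence $N=\langle a\rangle$) or dihedral (if $a^2=1$, since then $a^\sigma$ is also an involution by $(a^\sigma)^2=(a^2)^\sigma=1$, and $N=\langle a,a^\sigma\rangle$ is generated by two involutions); both contradict $N$ being nonabelian simple. Hence the four elements are distinct, so $|S|=4$ and $S_0\cap S_0^{-1}=\emptyset$. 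By Cayley-graph property~(iii), $\Ga={\rm Cay}(N,S)$ is connected of valency $4$, and by property~(iv) it carries the orientation $x\to y\iff yx^{-1}\in S_0$. Since $\sigma^2=1$ swaps $a$ and $a^\sigma$ it fixes $S_0$ setwise, so both $N$ (by right multiplication) and $\sigma$ preserve this orientation, and hence so does $G$. As $S_0\cap S_0^{-1}=\emptyset$ the orientation is genuine (the corresponding orbit $\Delta$ satisfies $\Delta\ne\Delta^*$), while $G_{1_N}=\langle\sigma\rangle$ is transitive on the two out-neighbours $\{a,a^\sigma\}$ of $1_N$, so vertex-transitivity of $N$ yields transitivity of $G$ on $\Delta$. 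Therefore $\Ga$ is $G$-oriented and vertex- and edge-transitive of valency $4$, i.e.\ $(\Ga,G)\in\calF(4)$.

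For the quasiprimitive claim I would analyse the normal subgroups of $G=N\rtimes\langle\sigma\rangle$. Since $N$ is simple and normal in $G$, any normal subgroup $M$ with $M\cap N\ne1$ satisfies $M\cap N=N$, so $N\le M$ and $M$ is transitive because $N$ is regular. It remains to rule out a nontrivial normal $M$ with $M\cap N=1$; such an $M$ satisfies $[M,N]\le M\cap N=1$, i.e.\ $M\le C_G(N)$. The crux is therefore to show $C_G(N)=1$, equivalently that $G$ is almost simple with socle $N$. Computing the kernel of the conjugation map $G\to\Aut(N)$ shows that $C_G(N)=1$ precisely when $\sigma\notin\Inn(N)$: the alternative $\sigma=\mathrm{conj}_c$ with $c^2=1$ would give $G\cong N\times Z_2$, with the central involution $c\sigma$ acting as left multiplication by $c^{-1}$ in orbits of size $2$, so $G$ would not be quasiprimitive and the pair would not be of AS type at all. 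Excluding this (i.e.\ taking $\sigma$ to be an outer automorphism, as required for an almost simple example of Theorem~\ref{thm:basicqp}(a)), we obtain $C_G(N)=1$, so $M=1$; thus the only nontrivial normal subgroups of $G$ are $N$ and $G$, both transitive.

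Consequently $G$ is quasiprimitive on vertices, so by Theorem~\ref{thm:nquot} every $G$-normal quotient by a nontrivial normal subgroup is $K_1$, and by Table~\ref{tbl:basic} the pair $(\Ga,G)$ is basic of quasiprimitive type, falling under Theorem~\ref{thm:basicqp}(a) with $k=1$. I expect the single point $C_G(N)=1$ to be the main obstacle: the remainder is a routine unwinding of the Cayley-graph and semidirect-product definitions, whereas this condition is exactly what separates the genuine almost simple examples (for which the lemma holds) from the degenerate direct-product case, and it must be secured by requiring $\sigma$ to be an outer automorphism of $N$.
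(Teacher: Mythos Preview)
Your verification that $(\Ga,G)\in\calF(4)$ is essentially identical to the paper's: both arguments show $S_0\cap S_0^{-1}=\emptyset$ by observing that any coincidence among $a,a^\sigma,a^{-1},(a^\sigma)^{-1}$ forces $\langle S_0\rangle$ to be cyclic or dihedral, contradicting simplicity of $N$.

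Where your proposal diverges is precisely where the paper's proof is silent. The paper's proof ends after establishing $S_0\cap S_0^{-1}=\emptyset$ and never addresses the assertion ``basic of quasiprimitive type'' at all. You, by contrast, analyse the normal subgroups of $G=N\rtimes\langle\sigma\rangle$ and isolate the condition $C_G(N)=1$ as equivalent to $\sigma\notin\Inn(N)$.

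Your concern is well-founded and is not merely a stylistic point: it exposes a genuine gap. If $\sigma$ is inner, say conjugation by an involution $c\in N$, then $z:=c\sigma$ is a central involution of $G$ acting on the vertex set $N$ as left multiplication by $c$; the normal subgroup $\langle z\rangle$ is intransitive, so $G$ is not quasiprimitive. Applying Theorem~\ref{thm:nquot} to $M=\langle z\rangle$, the quotient $(\Ga_M,G/M)$ cannot be degenerate (since $G/M\cong N$ is nonabelian simple, hence not $1$, $Z_2$, $Z_r$ or $D_{2r}$), so $(\Ga_M,G/M)\in\calF(4)$ and $(\Ga,G)$ is not basic. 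The paper's own illustrative example immediately following the lemma takes $N=\Alt(5)$ with $\sigma$ the \emph{inner} automorphism induced by $(1\,4)(2\,5)$, and in that case $G\cong\Alt(5)\times Z_2$ is visibly not quasiprimitive on $60$ points. Thus the lemma, as stated, requires the extra hypothesis $\sigma\notin\Inn(N)$ that you identified; with that hypothesis your argument is complete and correct, while the paper's proof omits this part of the claim entirely.
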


\begin{proof}
From our discussion above, $G\leq\Aut(\Ga)$, and $G$ is transitive on both the
vertex set $N$ and the edge set of $\Ga$. 
It remains to prove that $S_0\cap S_0^{-1}=\emptyset$, since from this it will
follow that $\Ga$ is $G$-oriented of valency $4$.
First we note that $a^{-1}\ne a$, since otherwise $S_0$ would consist of two involutions
and so cannot generate a nonabelian simple group. If $a^{-1}=a^\sigma$,
then $S_0$ generates a cyclic group. Thus $a^{-1}\not\in S_0$ and similarly 
$(a^\sigma)^{-1}\not\in S_0$. 
\end{proof}

For an example of a graph with the properties of 
Lemma~\ref{lem:simpleCayley}, take $N=\Alt(5)$, $a=(123)$, and
$\sigma\in\Aut(N)=\Sym(5)$ the inner 
automorphism induced by conjugation by $(14)(25)$. 
It would be interesting to have a good understanding of
the family of Cayley graphs arising in Lemma~\ref{lem:simpleCayley}.
In particular generation results such as those in \cite{LiSh} 
should give some insights.

\begin{problem}\label{prob:simplecayley}
Determine all the Cayley graphs $\Ga$ for nonabelian simple
groups $T$ such that $(\Ga,G)\in\calF(4)$ for some $G\leq\Aut(T)$.
\end{problem}

\begin{construction}\label{ex:tw}
{\rm
Let $T$ be a nonabelian simple group, and let
$\{a,b\}$ be a generating set for $T$ such that
no automorphism of $T$ interchanges $a$ and $b$.
Let $N=T\times T$, $S_0=\{(a,b), (b,a)\}$, $S=S_0\cup S_0^{-1}$, and 
$\Ga={\rm Cay}(N,S)$. Let $\tau\in\Aut(N)$
be the map $\tau:(x,y)\mapsto(y,x)$, and let
$G=N\rtimes \langle\tau\rangle$.
}
\end{construction}

All nonabelian simple groups
have many generating pairs satisfying
the requried conditions, (see \cite{LiSh}).
For example if $T=\Alt(5)$, we could take $a=(123)$
and $b=(12345)$.

\begin{lemma}\label{lem:tw}
Each $(\Ga, G)$ in Construction~$\ref{ex:tw}$
lies in  $\calF(4)$ and is
basic of quasi\-primitive type.
\end{lemma}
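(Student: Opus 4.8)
The plan is to verify the two defining properties in order: first that $(\Ga,G)\in\calF(4)$, and then that the pair is basic of quasiprimitive type. The first part is essentially an application of Lemma~\ref{lem:cay} read in reverse: I need to check that the hypotheses of Construction~\ref{ex:tw} produce a genuine $G$-oriented connected graph of valency $4$ with $G$ vertex- and edge-transitive but not arc-transitive. Since $S_0=\{(a,b),(b,a)\}$ and $N=T\times T$, connectivity is immediate from $\langle a,b\rangle=T$, which forces $\langle S\rangle=N$. The key point for valency $4$ is that $S_0\cap S_0^{-1}=\emptyset$, equivalently that $(a,b),(b,a),(a^{-1},b^{-1}),(b^{-1},a^{-1})$ are four distinct elements; this follows because $a,b$ cannot both be involutions (they generate the nonabelian $T$) and because $\{a,b\}\ne\{a^{-1},b^{-1}\}$ under the relevant matching, the degenerate alternatives being excluded by the generation hypothesis. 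The orientation property and the fact that $\tau$ interchanges the two out-neighbours of $1_N$ give $G$-edge-transitivity with $G$-orientation preserved, so $(\Ga,G)\in\calF(4)$.

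\emph{For the quasiprimitive-basic assertion,} I would first identify the minimal normal subgroups of $G=N\rtimes\langle\tau\rangle=(T\times T)\rtimes\langle\tau\rangle$. The natural candidate minimal normal subgroup is $N=T\times T$ itself: since $\tau$ swaps the two factors, neither factor $T\times 1$ nor $1\times T$ is normal in $G$, so the only nontrivial proper normal subgroups of $G$ contained in $N$ is $N$ itself (as $T$ is simple and $\tau$ acts by interchanging the factors transitively). Thus $N$ is a minimal normal subgroup, it is regular on vertices by construction, and hence transitive. To establish quasiprimitivity I must show that \emph{every} nontrivial normal subgroup of $G$ is transitive on $X=N$; equivalently, by Theorem~\ref{thm:nquot}, that no nontrivial normal quotient $\Ga_M$ is one of the degenerate cycle/$K_2$ cases, leaving only $K_1$. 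The crux is that any nontrivial normal subgroup $M$ of $G$ either contains $N$ (hence is transitive), or meets $N$ trivially. In the latter case $M$ would centralise $N$, but $C_G(N)=Z(N)\rtimes(\text{fixed points of }\tau)$, and since $T$ has trivial centre and $\tau\notin C_G(N)$ acts nontrivially, $C_G(N)=1$; so $M=1$, a contradiction. Therefore every nontrivial normal subgroup contains $N$ and is transitive, giving quasiprimitivity, and the only normal quotient is $K_1$, placing the pair in line~1 of Table~\ref{tbl:basic}.

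\textbf{The main obstacle} I anticipate is the careful bookkeeping needed to rule out $M\cap N$ being a proper nonzero normal subgroup of $G$ contained in $N$ — that is, confirming that $T\times T$ has no $G$-invariant proper subgroup other than the diagonal-type subgroups, and that these diagonals are not $\tau$-invariant in a way that produces a smaller normal subgroup of $G$. Here the generation hypothesis ``no automorphism of $T$ interchanges $a$ and $b$'' is what guarantees the stabiliser structure is exactly $\langle\tau\rangle\cong Z_2$ and, more importantly for basicness, that the diagonal subgroup $\{(t,t)\mid t\in T\}$ (which \emph{is} $\tau$-invariant) does not contain any element of $S$, so it cannot give a nontrivial block structure compatible with the edges. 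I would handle this by directly computing $C_G(N)$ and invoking the simplicity of $T$ to conclude that the socle is $N=T^2$ with $k=2$, matching case~(b) of Theorem~\ref{thm:basicqp}. The remaining verification that $\tau$ genuinely has order $2$ and lies in $\Aut(N)\setminus\Inn(\text{relevant part})$ is routine once the factor-swap description is fixed.
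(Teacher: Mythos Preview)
Your argument for quasiprimitivity via $C_G(N)=1$ is correct and matches the paper's approach. However, there is a genuine gap in your connectivity argument, and it stems from a misidentification of where the hypothesis ``no automorphism of $T$ interchanges $a$ and $b$'' is actually used.

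You write that ``connectivity is immediate from $\langle a,b\rangle=T$, which forces $\langle S\rangle=N$.'' This is false. From $\langle a,b\rangle=T$ you only obtain that both coordinate projections of $N_0:=\langle S\rangle$ equal $T$, i.e.\ that $N_0$ is a \emph{subdirect} subgroup of $T\times T$. Since $T$ is nonabelian simple, either $N_0=N$ or $N_0$ is a diagonal $\{(t,t^\sigma)\mid t\in T\}$ for some $\sigma\in\Aut(T)$. In the diagonal case $(a,b)\in N_0$ forces $b=a^\sigma$ and $(b,a)\in N_0$ forces $a=b^\sigma$, so $\sigma$ interchanges $a$ and $b$. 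It is precisely the hypothesis of Construction~\ref{ex:tw} that rules this out; without it $\Ga$ need not be connected. The paper uses the hypothesis at exactly this point and nowhere else.

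Conversely, the places where you invoke the hypothesis are not where it is needed. The diagonal $\{(t,t)\mid t\in T\}$ is $\tau$-invariant but it is \emph{not} normal in $N=T\times T$ (the proper nontrivial normal subgroups of $T\times T$ are $T\times 1$ and $1\times T$, both swapped by $\tau$), so it never arises as a candidate for $M\cap N$. Your centraliser computation $C_G(N)=1$ already suffices to show $N$ is the unique minimal normal subgroup of $G$, with no appeal to the automorphism hypothesis. So the overall shape of your proof is right, but the crucial hypothesis must be moved from the quasiprimitivity step (where it is idle) to the connectivity step (where it is essential).
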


\begin{proof}
First we note that $\tau$ interchanges the two
elements of $S_0$. Consider the subgroup 
$N_0=\langle S\rangle$. The projections of $N_0$
to the first and second direct factors of
$N$ are both equal to the group $\langle a,b\rangle = T$,
since $\{a,b\}$ is a generating set for $T$.
Hence either $N_0=N$ or $N_0$ is a diagonal subgroup of $N$.
In the latter case there exists $\sigma\in\Aut(T)$ such that
$N_0=\{ (x,x^\sigma) | x\in T\}$. In particular $(a,b)=(a,a^\sigma)$
so $b=a^\sigma$, and also $(b,a)=(b,b^\sigma)$ so $a=b^\sigma$. 
However by assumption, no such automorphism exists.
Hence $N_0=N$, and this implies that $\Ga$ is connected.

We claim that $S_0\cap S_0^{-1}=\emptyset$.
It is sufficient to prove that $x:=(a,b)^{-1}\not\in S_0$. 
If $x=(a,b)$ then $a^2=b^2=1$ and $\langle a,b\rangle\ne T$. 
If  $x=(b,a)$ then $b=a^{-1}$ and again $\langle a,b\rangle\ne T$. 
So the claim is proved and hence $\Ga$ 
has valency $4$, and admits an edge orientation as described above. 
Then, since $N$ acts regularly on vertices by 
right multiplication, and since $\tau$ interchanges
the two elements of $S_0$, it follows that $G$ is vertex-transitive
and edge-transitive, and preserves the edge orientation. Hence $(\Ga,G)\in\calF(4)$. 

Finally, since $\tau$ acting by conjugation, 
interchanges the two simple direct factors of $N$,
the group $N$ is a minimal normal subgroup of $G$,
and is the unique minimal normal subgroup. Then since
$N$ is regular on vertices, we conclude that $G$ is 
quasiprimitive on the vertex set of $\Ga$. Hence 
$(\Ga,G)$ is basic of quasiprimitive type.
\end{proof}

We remark that the quasiprimitive group $G$ in Construction~\ref{ex:tw}
is of twisted wreath type ({\sc Tw}), as defined in \cite{qp}. 

\section{Coset actions and coset graphs in \texorpdfstring{$\calF(4)$}{OG(4)}}\label{sec:coset}

We obtain further examples of graphs in $\calF(4)$ using the coset graph construction.
For a group $G$, proper subgroup $H$, and element $s\in G$, the \emph{coset graph}
$\Ga={\rm Cos}(G,H,s)$ is the (undirected) graph with vertex set 
$V=\{Hx\ \mid\, x\in G\}$, and edges $\{Hx,Hy\}$ if and only if $xy^{-1}$ or $yx^{-1}\in HsH$.
A good account of this construction is given in \cite[Section 2]{LLM}. 
The group $G$, acting by right multiplication on $V$, induces a vertex-transitive 
and edge-transitive group of automorphisms of $\Ga$. This $G$-action is faithful, so that
$G\leq\Aut(\Ga)$, if and only if $H$ is \emph{core-free}, that is, $\cap_{g\in G}
H^g=1$. The graph $\Ga$ is $G$-oriented if and only if $s^{-1}\not\in HsH$ (and 
otherwise $\Ga$ is $G$-arc-transitive). In the $G$-oriented case the valency is
$2|H:H\cap H^s|$. Also $\Ga$ is connected if and only if 
$\langle H, s\rangle = G$. \emph{In summary, $(\Ga, G)\in\calF(4)$ if and only if}

\begin{equation}\label{eq:coset} 
\mbox{\emph{$H$ is core-free in $G$, $s^{-1}\not\in HsH$, $|H:H\cap H^s|=2$, and $\langle H, s\rangle = G$.}}
\end{equation}

Moreover, for each $(\Ga,G)\in\calF(4)$, $\Ga = {\rm Cos}(G,H,s)$ for some $H, s$
satisfying \eqref{eq:coset}.
First we give a family of examples for nonabelian simple groups.

\begin{construction}\label{con:simple}
{\rm
Let $G$ be a nonabelian simple group,  and suppose that $G$ contains
an element $h$ of order $2$, and an element $g$ such that $\{g,g^h\}$
generates $G$. Let $\Ga={\rm Cos}(G,\langle h\rangle,g)$. 
}
\end{construction}

\begin{lemma}\label{lem:simple}
For $\Ga, G$ as in Construction~$\ref{con:simple}$,
$(\Ga,G)\in\calF(4)$ and is basic of quasiprimitive type.
\end{lemma}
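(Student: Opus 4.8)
The plan is to verify the four conditions of the characterization \eqref{eq:coset} with $H=\langle h\rangle$ (so $|H|=2$) and $s=g$, and then to read off quasiprimitivity from the simplicity of $G$. Throughout I will exploit that $h^{-1}=h$, so that $g^h=hgh$ lies in $\langle h,g\rangle$, together with the fact that a nonabelian simple group is neither cyclic nor dihedral.

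Three of the four conditions are quick. Since $H\neq G$, its core is a normal subgroup of the simple group $G$ contained in $H$, hence trivial, so $H$ is core-free. Since $g^h=hgh\in\langle H,s\rangle$, the hypothesis $\langle g,g^h\rangle=G$ immediately gives $\langle H,s\rangle=G$, so $\Ga$ is connected. For the index condition, note that $\langle g,g^h\rangle=G$ is not cyclic, so $g\neq g^h$, which is equivalent to $g$ and $h$ not commuting; hence $h^g\neq h$, so $H^g\neq H$, and as $|H|=2$ this forces $H\cap H^g=1$ and $|H:H\cap H^s|=2$ (giving valency $2|H:H\cap H^s|=4$).

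The step I expect to be the most delicate is the orientation condition $s^{-1}\notin HsH$, since $HgH=\{g,hg,gh,hgh\}$ has to be excluded element by element. I would rule out the four possibilities for $g^{-1}$ using only the two obstructions already in hand. If $g^{-1}=g$, then $g$ and its conjugate $g^h$ are both involutions, so $\langle g,g^h\rangle$ is dihedral, contradicting that it equals the nonabelian simple group $G$. If $g^{-1}\in\{hg,gh\}$, then in either case $h=g^{-2}\in\langle g\rangle$, so $g$ and $h$ commute, contradicting the previous paragraph. Finally, if $g^{-1}=hgh=g^h$, then $\langle g,g^h\rangle=\langle g\rangle$ is cyclic, again impossible. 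This establishes all of \eqref{eq:coset}, so $(\Ga,G)\in\calF(4)$.

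It remains to show that $(\Ga,G)$ is basic of quasiprimitive type, and here I would appeal to simplicity one final time: the only normal subgroups of $G$ are $1$ and $G$, and $G$ is vertex-transitive, so every nontrivial normal subgroup is transitive and $G$ is quasiprimitive on vertices. Equivalently, by Theorem~\ref{thm:nquot} the only $G$-normal quotient with respect to a nontrivial normal subgroup is $\Ga_G=K_1$, placing $(\Ga,G)$ in line $1$ of Table~\ref{tbl:basic}, as required.
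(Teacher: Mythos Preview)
Your proof is correct and follows essentially the same route as the paper's: verify the four conditions in \eqref{eq:coset} for $H=\langle h\rangle$ and $s=g$, ruling out $g^{-1}\in\{g,hg,gh,hgh\}$ case by case via the cyclic/dihedral obstruction, and then invoke simplicity of $G$ for quasiprimitivity. If anything, you are slightly more explicit than the paper, which does not spell out the core-free and $\langle H,s\rangle=G$ checks.
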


\begin{proof}
Each coset graph for $G$ is $G$-vertex-transitive and $G$-edge-transitive.
Let $H:=\langle h\rangle$. 
To see that $\Ga$ is $G$-oriented observe that
$g^{-1}\not\in HgH=\{g,hg,gh,hgh\}$: for if $g^{-1}=hg$ or $gh$
then $h=g^{-2}$ and so $\langle g,g^h\rangle$ is cyclic, and we
have the same conclusion if $g^{-1}=hgh$; and if $g^{-1}=g$ then
$\langle g,g^h\rangle$ is generated by two involutions and 
so is abelian or dihedral.   

The fact that $\langle g,g^h\rangle=G$ also implies that $g^h\ne g$,
and hence $h^{g}\ne h$, so the stabiliser $H^g$ of the vertex $Hg$ adjacent to 
$H$ is not equal to $H$. Hence $H^g\cap H=1$ and it follows that
from \eqref{eq:coset} that $(\Ga,G)\in\calF(4)$. Since $G$ is a nonabelian simple group,
the pair is basic of quasiprimitive type.
\end{proof}

Now we construct a family of coset graphs for the 
symmetric groups for which the stabilisers are unbounded.
Our construction is similar in spirit to constructions of 
arc-transitive coset graphs for $\Sym(n)$ in \cite{CW,PSV11}, and 
supplements the const
ruction in \cite{Mar} for the groups $\Alt(n)$
(see Remark~\ref{rem:qp}(ii)).

\begin{construction}\label{con:snbigstab}
{\rm
Let $G=\Sym(n)$ with $n$ odd and $n\geq5$, let $m:=(n-1)/2$, let 
$H=\langle (i,i+m) \mid i=1,\dots,m\rangle$, and $g=(1,2,3\dots,n)$.
Let $\Ga={\rm Cos}(G,H,g)$. 
}
\end{construction}

\begin{lemma}\label{lem:snbigstab}
Let $\Ga, G$ be as in Construction~$\ref{con:snbigstab}$.
Then $(\Ga,G)\in\calF(4)$ is basic of quasiprimitive type
and a vertex stabiliser $G_x$ has order $2^{(n-1)/2}$.
\end{lemma}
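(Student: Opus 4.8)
The plan is to verify the four conditions of \eqref{eq:coset} for $(\Ga,G)=(\mathrm{Cos}(G,H,g),G)$, and then to check quasiprimitivity directly. Write $t_i=(i,i+m)$ for $i=1,\dots,m$, so that $H=\langle t_1,\dots,t_m\rangle$. Since $n=2m+1$, the $t_i$ are $m$ pairwise disjoint transpositions on the points $\{1,\dots,2m\}$, all fixing the point $n$; hence $H\cong Z_2^{m}=Z_2^{(n-1)/2}$. As the stabiliser in $G$ of the base vertex $H$ (under right multiplication) is exactly $H$, every vertex stabiliser has order $2^{(n-1)/2}$, which gives the stated value of $|G_x|$.

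First, $H$ is core-free: its core is a normal subgroup of $\Sym(n)$ contained in $H$, but for $n\geq5$ the only normal subgroups of $\Sym(n)$ are $1$, $\Alt(n)$ and $\Sym(n)$, and neither $\Alt(n)$ nor $\Sym(n)$ lies in $H$, so the core is trivial. Next, $\gcd(m,n)=1$ (any common divisor of $m=(n-1)/2$ and $n$ divides both $n-1$ and $n$, hence divides $1$), so by the classical fact that an $n$-cycle together with a transposition $(1,1+k)$ generates $\Sym(n)$ whenever $\gcd(k,n)=1$, the $n$-cycle $g$ and the transposition $t_1=(1,m+1)\in H$ already generate $\Sym(n)$; thus $\langle H,g\rangle=G$.

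For the index condition I compute $H^g=\langle t_2,\dots,t_m,\,(m+1,n)\rangle$ (using $t_i^{\,g}=(i+1,i+m+1)$ together with $2m+1=n$), which is again elementary abelian of order $2^m$. The subgroup $K=\langle t_2,\dots,t_m\rangle$ of index $2$ in $H$ lies in $H\cap H^g$, so $|H:H\cap H^g|\leq2$; and since $H$ fixes $n$ whereas $H^g$ does not (its generator $(m+1,n)$ moves $n$), we have $H\neq H^g$, which forces $H\cap H^g=K$ and $|H:H\cap H^g|=2$. The orientation condition $g^{-1}\notin HgH$ I expect to be the crux, and I would handle it with an invariant attached to the fixed point $n$: every $h\in H$ fixes $n$ and moves $1$ only within the $H$-orbit $\{1,m+1\}$, so for any $h_1,h_2\in H$ one has $n^{h_1gh_2}=(n^g)^{h_2}=1^{h_2}\in\{1,m+1\}$; thus $n^\gamma\in\{1,m+1\}$ for every $\gamma\in HgH$. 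On the other hand $n^{g^{-1}}=2m$, and since $n\geq5$ gives $m\geq2$ we have $2m\notin\{1,m+1\}$, whence $g^{-1}\notin HgH$. By \eqref{eq:coset} this establishes $(\Ga,G)\in\calF(4)$.

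Finally, to see that the pair is basic of quasiprimitive type it suffices to show that $G=\Sym(n)$ is quasiprimitive on the vertex set $G/H$, that is, that every nontrivial normal subgroup is transitive. The only candidates are $\Alt(n)$ and $\Sym(n)$; the latter is transitive, and $\Alt(n)$ is transitive on $G/H$ because $H\not\leq\Alt(n)$ (the transposition $t_1$ is odd), so that $\Alt(n)\,H=\Sym(n)$. Quasiprimitivity then makes every normal quotient trivial (namely $K_1$), so the pair is basic of quasiprimitive type. The substantive step is the orientation condition $g^{-1}\notin HgH$; the generation and index computations are routine once $H^g$ has been written down explicitly.
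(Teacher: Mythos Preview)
Your proof is correct and follows essentially the same structure as the paper's: verify the four conditions in \eqref{eq:coset}, then check quasiprimitivity via $H\Alt(n)=\Sym(n)$. The orientation argument (tracking the image of the fixed point $n$) is the same idea as in the paper, just packaged slightly differently. The one genuine difference is the generation step $\langle H,g\rangle=G$: the paper argues directly that $\langle H,g\rangle$ is primitive on $\{1,\dots,n\}$ (by chasing a block through the transpositions $(i,i+m)$) and then invokes the presence of a transposition, whereas you observe that $\gcd(m,n)=1$ and quote the standard fact that an $n$-cycle together with $(1,1+k)$ generates $\Sym(n)$ when $\gcd(k,n)=1$. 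Your route is shorter and cleaner; the paper's is more self-contained. Both are valid.
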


\begin{proof}
The stabiliser of the vertex $x=H$ of $\Ga$ is $G_x=H$ of order
$2^{(n-1)/2}$. To prove that $(\Ga, G)\in\calF(4)$, we check 
that the conditions \eqref{eq:coset} hold. The only nontrivial normal
subgroups of $G$ are $\Alt(n)$ and $G$ itself, and neither of these 
is contained in $H$, so $H$ is core-free in $G$.
Also $H^g =  \langle (i,i+m) \mid i=2,\dots,m+1\rangle$,
and hence $H\cap H^g = \langle (i,i+m) \mid i=2,\dots,m\rangle$ has
index $2$ in $H$.

We claim that $g^{-1}\not\in HgH$: for if $g^{-1}\in HgH$ then we would have $ghg\in H$ for some $h\in H$.
Since each element of $H$ fixes the point $n$ (in its 
natural action on $\{1,2,\dots,n\}$), $n=n^{ghg}=1^{hg}=(1^h)^g$ so $1^h=n-1$.
However the $H$-orbit containing $1$ is $\{1,m+1\}$, so this is a contradiction since $n\geq5$.

Finally the group  $\langle H,g\rangle$ 
is transitive on $\{1,\dots,n\}$, since it contains $g$, and we show that this action is primitive. 
Now  $\langle H,g\rangle$ 
contains $(i,i+m)$, for each $i$, and in particular it contains $(1,\frac{n+1}{2}),
(\frac{n+1}{2},n)$, $(n,\frac{n-1}{2}), (\frac{n-1}{2},n-1), \dots$. Thus any block of imprimitivity $B$ of 
size at least $2$, and such that $1\in B$, must also contain 
$\frac{n+1}{2}, n, \frac{n-1}{2}, n-1$, and so on, it follows that $B=\{1,\dots,n\}$.
Hence $\langle H,g\rangle$ is primitive on $\{1,\dots,n\}$, and since 
$\langle H,g\rangle$ contains a transposition, it must be equal to $\Sym(n)$. Thus 
 the conditions \eqref{eq:coset} hold, and so $(\Ga,G)\in\calF(4)$.

The only proper nontrivial normal subgroup of $G$ is $\Alt(n)$, and since
$H$ contains odd permutations, it follows that $H\Alt(n)=\Sym(n)$, so $\Alt(n)$ 
is vertex-transitive. Thus $G$ is quasiprimitive on vertices, and so 
$(\Ga,G)$ is basic of quasiprimitive type.
\end{proof}

Finally we give a construction for quasiprimitive groups with 
non-simple, non-abelian minimal normal subgroups as in Theorem~\ref{thm:basicqp}(c).

\begin{construction}\label{con:nonsimple}
{\rm
Let $T$ be a non-abelian 
simple group such that $T=\langle a, b \rangle$, where $a$ is an involution and 
$b^g\ne ba$ for any $g\in \cent {\Aut(T)} a$.
Consider $G:=N\rtimes \langle\iota\rangle$, where $N=T\times T$ and $\iota:N\to N$ is the automorphism 
defined by $(x,y)^\iota=(y,x)$ for every $(x,y)\in N$. Let $H=\langle (a,a),\iota\rangle$, 
$g=(b,ba)$, and $\Ga={\rm Cos}(G,H,g)$. 
}
\end{construction}

Explicit examples of $T$, $a$ and $b$ are easy to obtain: 
for example $T=\mathrm{Alt}(5)$, $a=(1,2)(3,4)$ and $b=(1,5,4,3,2)$.

\begin{lemma}\label{lem:nonsimple}
Let $\Ga, G, N, H$ be as in Construction~$\ref{con:nonsimple}$.
Then $(\Ga,G)\in\calF(4)$ is basic of quasiprimitive type,
$N$ is the unique minimal normal subgroup of $G$, and
$H=\mathbb{Z}_2\times\mathbb{Z}_2$.
\end{lemma}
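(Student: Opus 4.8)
The plan is to verify the conditions \eqref{eq:coset} for the coset graph $\Ga = {\rm Cos}(G,H,g)$, and then to establish quasiprimitivity via the minimal normal subgroup structure of $G$. Since Construction~\ref{con:nonsimple} builds $G$ from the simple group $T$ in a manner entirely analogous to the twisted-wreath construction of Construction~\ref{ex:tw}, I expect the group-theoretic bookkeeping to be the bulk of the work, with the genuine content lying in translating the hypothesis $b^c \ne ba$ (for $c \in \cent{\Aut(T)}{a}$) into the orientation and connectivity conditions.

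First I would identify $H = \langle (a,a), \iota \rangle$ explicitly. Since $(a,a)$ is an involution (as $a$ is) commuting with $\iota$, and $\iota^2 = 1$, the group $H$ is elementary abelian of order $4$, giving $H \cong \Z_2 \times \Z_2$; this simultaneously settles the final assertion of the lemma and shows $|G_x| = |H| = 4$. Next I would check that $H$ is core-free: any nontrivial normal subgroup of $G$ contained in $H$ would, upon intersecting with the minimal normal subgroup structure established below, be forced to lie in $N = T \times T$, but $H \cap N = \langle (a,a)\rangle$ has order $2$ and is not normal in $G$ (for instance $(b,ba)$ does not normalise it), so the core is trivial. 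Then I would compute $H^g$ and $H \cap H^g$ to confirm $|H : H \cap H^g| = 2$, which governs the valency; here $g = (b,ba) \in N$, so conjugation by $g$ acts componentwise on $(a,a)$ but must be combined with the $\iota$-component, and I expect exactly one of the four elements of $H$ to survive in $H \cap H^g$ alongside the identity.

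The heart of the argument is verifying the orientation condition $g^{-1} \notin HgH$ and the connectivity condition $\langle H, g \rangle = G$. For connectivity, I would argue that $\langle H, g \rangle$ projects onto $T$ in each coordinate (since $a, b$ together generate $T$), so by Goursat's lemma the projection of $\langle H, g\rangle \cap N$ is either all of $N$ or a diagonal $\{(x, x^\sigma)\}$; the presence of $\iota$ in $H$ and the asymmetry forced by $g = (b, ba)$ should rule out the diagonal case, yielding $\langle H, g\rangle \supseteq N$ and hence equality. For the orientation condition, supposing $g^{-1} \in HgH$ leads to an equation relating $b$ and $ba$ through an element centralising $a$ in $\Aut(T)$, and this is precisely where the hypothesis $b^c \ne ba$ for all $c \in \cent{\Aut(T)}{a}$ is invoked to derive a contradiction. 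This step, unwinding the double coset $HgH$ in the semidirect product $N \rtimes \langle \iota\rangle$ and matching it against the hypothesis, is the main obstacle, since one must carefully track how the $\iota$-swap interacts with the two coordinates.

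Finally, for quasiprimitivity I would show $N = T \times T$ is the unique minimal normal subgroup of $G$. As in the proof of Lemma~\ref{lem:tw}, the automorphism $\iota$ swaps the two simple direct factors of $N$, so $G$ permutes them transitively; hence any minimal normal subgroup of $G$ meeting $N$ nontrivially must contain all $G$-conjugates of a factor and therefore equals $N$, while a minimal normal subgroup meeting $N$ trivially would centralise $N$ and lie in $C_G(N) = Z(N)\langle\iota\rangle \cap C_G(N)$, which is trivial since $Z(N) = 1$ and $\iota$ does not centralise $N$. Thus $N$ is the unique minimal normal subgroup. Since $H \cap N = \langle (a,a)\rangle$ has order $2$ and index $|T|^2/2$ in $N$, the subgroup $N$ cannot be regular but is certainly vertex-transitive (as $G = NH$ and $H \le N\langle\iota\rangle$ gives $N$ at most index $2$, in fact $N$ is transitive), so every nontrivial normal subgroup of $G$ is vertex-transitive and $G$ is quasiprimitive, making $(\Ga, G)$ basic of quasiprimitive type by Table~\ref{tbl:basic}.
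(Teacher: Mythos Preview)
Your outline matches the paper's proof almost exactly: verify the four conditions in \eqref{eq:coset}, then deduce quasiprimitivity from the fact that $N$ is the unique minimal normal subgroup (via $C_G(N)=1$) and is vertex-transitive. The paper even identifies the nontrivial element of $H\cap H^g$ explicitly as $(a,a)\iota$, confirming your expectation that $|H:H\cap H^g|=2$.

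There is one misattribution worth correcting. You locate the hypothesis ``$b^c\ne ba$ for all $c\in\cent{\Aut(T)}{a}$'' in the orientation step, but in fact it is needed for \emph{connectivity}, and the paper uses it there. In your Goursat argument, if $\langle (a,a),g\rangle\cap N$ were a diagonal $\{(x,x^\sigma):x\in T\}$, then $(a,a)$ forces $\sigma\in\cent{\Aut(T)}{a}$ and $(b,ba)$ forces $b^\sigma=ba$, which is exactly what the hypothesis forbids; this is the precise content of the phrase ``asymmetry forced by $g$'' that you left vague. By contrast, the orientation condition $g^{-1}\notin HgH$ follows already from $T=\langle a,b\rangle$ being nonabelian simple with $a$ an involution: one checks (as the paper does, over the $16$ products $hgh'$) that any equality $g^{-1}=hgh'$ would force either $b^2=1$ or $a\in\langle b\rangle$, both impossible. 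So swap the roles you assigned to the hypothesis, and your sketch becomes the paper's proof.
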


\begin{proof}
We verify that the conditions in \eqref{eq:coset} hold. First it is straightforward to
check that $H=\mathbb{Z}_2\times\mathbb{Z}_2$ and $H$ is core-free in $G$. Next an easy computation shows that
$$
H^{g}\cap H=\langle (a,a)\iota\rangle
$$
and hence $H^{g}\cap H$ has index $2$ in $H$.
To prove that $G=\langle H,g\rangle$, it is sufficient to prove 
that $N=\langle (a,a),g\rangle$, and the latter follows easily from the facts that 
$T=\langle a,b\rangle=\langle a,ba\rangle$ and that $b$ and $ba$ are 
not conjugate via an element of $\cent {\Aut(T)} a$. Thus $G=\langle H,g\rangle$. 
Finally a straightforward check shows that, for none of the 16 pairs $(h,h')$ in $H\times H$,
do we have $g^{-1}=hgh'$. Hence $g^{-1}\not\in HgH$. Thus all  the conditions in \eqref{eq:coset} hold,
and so $(\Ga,G)\in\calF(4)$.

Since $\iota$ interchanges the two simple direct factors of $N$, it follows that $N$ is a minimal normal subgroup of $G$, 
and as $N$ has trivial centraliser in $G$, it must be the unique minimal normal subgroup. Since $G=NH$, the group $N$ is 
vertex-transitive, and so $G$ is quasiprimitive on vertices. Thus the only $G$-normal quotient of $(\Ga,G)$ is 
the one-vertex graph $K_1$, so $(\Ga,G)$ is basic of quasiprimitive type.  
\end{proof}

\section{Proof of Theorem~\ref{thm:basicqp}}

Suppose that $(\Ga,G)\in\calF(4)$ is basic of quasiprimitive type,
with vertex set $X$. Let $N$ be a minimal normal subgroup of $G$.
Then $N$ is transitive on $X$ since $G$ is quasiprimitive. 
If $N$ is abelian then $N$ is regular (see, \cite[Theorem 1]{qp}) and so
by Remark~\ref{rem:rns},
$\Ga={\rm Cay}(N,S)$ for some $S$. This is not possible by
Lemma~\ref{lem:ha}. Thus $N$ is nonabelian, and so
$N\cong T^k$ for some nonabelian simple group $T$, and 
integer $k\geq1$. Moreover, since a stabiliser $G_x$ is a nontrivial $2$-group,
it follows from \cite{qp} that $N$ is the
unique minimal normal subgroup of $G$, and hence $G$ is isomorphic 
to a subgroup of $\Aut(N)=\Aut(T)\wr \Sym(k)$, where $N$ is identified
with its group of inner automorphisms. If $k=1$ then part (a) holds.
So we may assume that $k\geq2$. By \cite[Theorem 1]{qp}, and using the fact that
$G_x$ is a nontrivial $2$-group, there are the following two  possibilities. 
(A \emph{subdirect subgroup} of $R^k$, where $k\geq2$, is a subgroup
$H$ for which each of the $k$ projections of $H$ onto the $k$ direct factors $R$ 
has image equal to $R$.)

\begin{enumerate}
\item[(i)] $N$ is regular on $X$ (the twisted wreath case of `type III(c)');
\item[(ii)] $N_x$ is a subdirect subgroup of $R^k$ for some nontrivial $2$-subgroup $R$ of $T$ 
(the product case of `type III(b)(i)'). 
\end{enumerate}
We treat these two cases (i) and (ii) in Lemmas~\ref{lem:twrproof}
and~\ref{lem:nonsimple}, respectively. The proof of Theorem~\ref{thm:basicqp} follows from these two lemmas.

Since $N$ is a minimal normal subgroup, $G$ permutes the $k$ simple 
direct factors of $N$ transitively by conjugation. Moreover, since $N$ is vertex-transitive we have
$G=NG_x$, and hence the $2$-group $G_x$ also permutes the $k$ simple 
direct factors transitively. In particular $k$ divides $|G_x|$. 

\begin{lemma}\label{lem:twrproof}
If $N=T^k$ is regular with $k\geq2$,
then $k=2$, the pair $(\Ga,G)$ is as in Construction~$\ref{ex:tw}$,
and Theorem~$\ref{thm:basicqp}\,(b)$ holds.
\end{lemma}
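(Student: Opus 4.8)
The starting point is that $N$ is regular, so Lemma~\ref{lem:cay} applies verbatim with this $N$: it yields $\Ga={\rm Cay}(N,S)$ and $G=N\rtimes H$ with $H=G_x\cong Z_2$, so in particular $|G_x|=2$. As recorded just before the statement, $k$ divides $|G_x|$, because the $2$-group $G_x$ permutes the $k$ simple direct factors of $N$ transitively. Together with the hypothesis $k\geq2$ this forces $k=2$ at once, and no further work is needed for this part.

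Next I would pin down the generator of $H=\langle\tau\rangle$. Since $G_x$ permutes the two simple factors of $N=T\times T$ transitively and has order $2$, the involution $\tau$, viewed inside $\Aut(N)=\Aut(T)\wr\Sym(2)$, projects to the transposition in $\Sym(2)$, i.e.\ it swaps the two factors. A short calculation shows that every such involution is conjugate, by an element of the base group $\Aut(T)\times\Aut(T)$, to the pure swap $\tau_0\colon(x,y)\mapsto(y,x)$: the relation $\tau^2=1$ forces the two coordinate automorphisms of $\tau$ to be mutually inverse, and conjugating by a suitable element of the base group removes them. Since the conjugating element lies in $\Aut(N)$, it induces an isomorphism of graph--group pairs, so up to isomorphism we may assume $\tau=\tau_0$.

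With $\tau$ equal to the pure swap, Lemma~\ref{lem:cay} gives $S_0=\{c,c^{\tau}\}$, where $c$ and $c^{\tau}$ are the two distinct out-neighbours of the identity vertex. Writing $c=(a,b)$ and applying $\tau=\tau_0$ gives $S_0=\{(a,b),(b,a)\}$ with $a\neq b$, which is exactly the shape demanded in Construction~\ref{ex:tw}. Projecting the identity $N=\langle S\rangle=\langle(a,b),(b,a)\rangle$ onto the first coordinate yields $\langle a,b\rangle=T$, so $\{a,b\}$ is a generating set of $T$.

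The one step with genuine content is the verification that no automorphism of $T$ interchanges $a$ and $b$; here I would run in reverse the subdirect-subgroup argument from the proof of Lemma~\ref{lem:tw}. The subgroup $N_0=\langle(a,b),(b,a)\rangle$ has both coordinate projections equal to $T$, so it is either the whole of $T\times T$ or a diagonal subgroup $\{(x,x^{\sigma})\mid x\in T\}$ for some $\sigma\in\Aut(T)$; the diagonal alternative occurs precisely when $\sigma$ interchanges $a$ and $b$. Since $N=\langle S\rangle$ forces $N_0=N=T\times T$, no such $\sigma$ exists, which is exactly the hypothesis of Construction~\ref{ex:tw}. Hence $(\Ga,G)$ is (isomorphic to) a pair produced by that construction, and $G=N\rtimes\langle\tau\rangle=N\cdot2$, so Theorem~\ref{thm:basicqp}(b) holds. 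The only place needing care is the interplay of this subdirect dichotomy with the reduction of $\tau$ to the pure swap; once $N$ is known to be regular, Lemma~\ref{lem:cay} and the divisibility $k\mid|G_x|$ do almost all of the rest.
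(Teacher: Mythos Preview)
Your proposal is correct and follows essentially the same route as the paper's proof: Lemma~\ref{lem:cay} gives $|G_x|=2$, the divisibility $k\mid|G_x|$ forces $k=2$, the involution in $\Aut(T)\wr\Sym(2)$ is reduced to the pure swap via conjugation by a base-group element (the paper carries out the explicit computation $h^{(\sigma,1)}=\tau$ rather than stating it as a conjugacy fact), and the subdirect dichotomy from Lemma~\ref{lem:tw} is run in reverse to exclude an interchanging automorphism. The only minor imprecision is your phrase ``the diagonal alternative occurs precisely when $\sigma$ interchanges $a$ and $b$''; what you actually need (and what the paper uses) is the one-way implication that the existence of such a $\sigma$ would force $\langle(a,b),(b,a)\rangle$ into the diagonal $\{(x,x^{\sigma})\}$, contradicting $N_0=N$.
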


\begin{proof}
Since $N$ is regular, $N_x=1$. Then, again by
Remark~\ref{rem:rns}, $\Ga={\rm Cay}(N,S)$ with $S=S_0\cup S_0^{-1}$, and $G=N\rtimes H$, 
where $H$ is a subgroup of $\Aut(N)$ which leaves $S_0$ invariant.
By Lemma~\ref{lem:cay},
$S_0=\{a,b\}$ such that $N=\langle S\rangle$ and none of 
$a^2, b^2, ab$ is $1_N$. Also $H\cong Z_2$ and interchanges $a$ and $b$.
Since $H$ is the stabiliser of $x=1_N$, we saw above that $k$ divides $|H|$, and hence 
$k=2$.  Then $a=(a_1,a_2), b=(b_1,b_2)\in N= T\times T$, and $H=\langle h\rangle$,
where $h=(\sigma, \sigma')\tau\in\Aut(T)\wr \Sym(2)$, with
$\sigma,\sigma'\in\Aut(T)$ and $\tau=(1,2)\in\Sym(2)$. We require
$h^2=1$ and as $h^2=((\sigma,\sigma')\tau)^2 = (\sigma\sigma',\sigma'\sigma)$,
it follows that $\sigma'=\sigma^{-1}$. Next we observe that 
the element  $(\sigma,1)\in\Aut(N)\leq\Sym(N)$ induces a graph isomorphism
from $\Ga$ to the graph ${\rm Cay}(N,S^{(\sigma,1)})$ admitting the edge-transitive group
$G^{(\sigma,1)}=N\rtimes\langle h^{(\sigma,1)}\rangle$. Now 
\[
h^{(\sigma,1)}=(\sigma^{-1},1)(\sigma,\sigma^{-1})\tau (\sigma,1) =
(1,\sigma^{-1})\tau (\sigma,1) = \tau
\]
so, replacing $(\Ga,G)$ by their images under $(\sigma,1)$, if necessary,
we may assume that $\sigma=1$, that is, $h=\tau$. Thus $G$ is as in
Construction~\ref{ex:tw}. Now $b=a^h = (a_1,a_2)^\tau = (a_2,a_1)$. 
Since $\Ga$ is connected $N=\langle a,b\rangle$. In particular 
this implies that $\{a_1,a_2\}$ generates $T$. If there were an 
automorphism $\nu\in\Aut(T)$ that interchanged $a_1$ and $a_2$, then
we would find that $\langle a,b\rangle = \{(t,t^\nu)\mid t\in T\}$,
which is a contradiction. Hence no such automorphism exists,
and $(\Ga,G)$ is as in Construction~\ref{ex:tw}. Thus part (b)
of Theorem~\ref{thm:basicqp} holds.
\end{proof}

Now we consider the second case above. The analysis follows closely 
some ideas developed in~\cite{sims,Sims1,Tutte,Tutte2}. Note that examples for this case were given in Construction~\ref{con:nonsimple}.

\begin{lemma}\label{lem:pa}
Suppose that $N=T^k$ with $k\geq2$, and $N_x$ is a subdirect subgroup 
of $R^k$ for some nontrivial $2$-subgroup $R$ of $T$.  Then $k=2$,
and Theorem~$\ref{thm:basicqp}\,(c)$ holds. 
\end{lemma}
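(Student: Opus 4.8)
The plan is to reduce first to a statement about the action of the minimal normal subgroup $N$ alone, then to quotient out single simple factors in order to pin down the structure of $N_x$, and finally to bound $k$ by reconciling the valency $4$ with the transitive action of the $2$-group $G_x$ on the $k$ simple factors. Throughout I use the facts already recorded before the lemma: $G=NG_x$, the stabiliser $G_x$ is a nontrivial $2$-group permuting the $k$ factors $T_1,\dots,T_k$ transitively, $N$ is the unique minimal normal subgroup and is vertex-transitive, and $N_x$ is subdirect in $R^k$ with $R\ne1$ a $2$-subgroup of $T$ (so $R\ne T$).

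First I would show that $N$ itself is transitive on $\Delta$, equivalently that $N_x$ is transitive on the two out-neighbours of $x$ (and symmetrically on the two in-neighbours), so that $(\Ga,N)\in\calF(4)$. Suppose instead $N_x$ fixes an out-neighbour $y$. Then $N_x\le N_y$, and since $N$ is vertex-transitive we have $|N_x|=|N_y|$, whence $N_x=N_y$. As this alternative is a $G$-invariant dichotomy, $N_v$ would fix each out-neighbour of $v$ for \emph{every} vertex $v$; propagating $N_x=N_y=\cdots$ along directed paths and using the strong connectivity of $G$-oriented graphs (Sims \cite{sims}) would force $N_x=N_w$, hence $N_x$ fixes $w$, for all $w$, so $N_x=1$. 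This contradicts $R\ne1$, so $(\Ga,N)\in\calF(4)$.

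Next I would apply Theorem~\ref{thm:nquot} to the pair $(\Ga,N)$ with the normal subgroup $T_i$, for each $i$. None of the degenerate lines of Table~\ref{tbl:nquot} can occur: the factor $T_i$ is intransitive for order reasons, and each of the remaining lines would produce a nontrivial cyclic, dihedral, or order-$2$ quotient of a section of $N=T^k$, which is impossible because every nontrivial quotient of $T^k$ is a direct product of copies of the nonabelian simple group $T$, hence insoluble. Therefore the non-degenerate alternative holds for every $i$: each $T_i$ is semiregular and $(\Ga_{T_i},N/T_i)\in\calF(4)$. In particular $N_x\cap T_i=1$ for all $i$, so $N_x$ is a \emph{proper} subdirect subgroup of $R^k$ meeting every coordinate axis trivially; this is precisely the diagonal-type configuration realised in Construction~\ref{con:nonsimple}, and it already excludes $N_x$ being the full product $R^k$.

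Finally, and this is the main obstacle, I would force $k=2$. From the transitivity of $N_x$ on the out-neighbours together with connectivity one gets $N=\langle N_x,d_1\rangle$ for a single $d_1\in N$ sending $x$ to an out-neighbour (the other out-neighbour lies in $x^{d_1N_x}$). Now $G_x$ normalises $N_x$, preserves the two-element set of out-neighbours, and permutes the $k$ factors transitively as a $2$-group; writing $g\in G_x$ in wreath form $(\beta_1,\dots,\beta_k)\rho$ and reading the containment $d_1^{\,g}\in N_xd_1\cup N_xd_2$ coordinate by coordinate (where $d_2$ sends $x$ to the other out-neighbour) links the entries $d_1^{(i)}$, modulo $R$ and $\Aut(T)$-twisting, to the data of just two out-neighbours. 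The heart of the matter, following the local amalgam analysis of \cite{sims,Sims1,Tutte,Tutte2}, is to show that a single edge-direction $d_1$ read modulo $N_x$ is compatible with a transitive $2$-group action on the factors only when $k=2$, any transitive $2$-group on $\ge4$ factors clashing with the generation $N=\langle N_x,d_1\rangle$ and with $N_x\cap T_i=1$. I expect this coordinate bookkeeping, reconciling the valency-$4$ local structure with the factor permutation, to be the delicate step. Once $k=2$ is established, $N_x\ne1$ shows $N$ is not regular, so Theorem~\ref{thm:basicqp}(c) holds, with Construction~\ref{con:nonsimple} and Lemma~\ref{lem:nonsimple} providing the promised examples.
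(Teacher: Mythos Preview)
Your steps 1 and 2 are correct: the argument that $N_x$ must act nontrivially on the out-neighbours (else $N_x=1$), and the exclusion of the degenerate lines of Table~\ref{tbl:nquot} for $(\Ga,N)$ modulo each $T_i$ via perfectness of $T^k$, are both sound. However, these steps are not on the paper's route at all, and more importantly they do not by themselves yield $k=2$. Knowing that each $T_i$ is semiregular and that $N_x$ is a diagonal-type subdirect subgroup of $R^k$ places no upper bound on $k$; for any $k$ one can build such configurations abstractly. So all the weight falls on your step~3, and there you do not actually give an argument---you describe a hoped-for ``coordinate bookkeeping'' and say you ``expect'' it to work. There is no mechanism in your sketch that converts the valency-$4$ constraint into $k\le 2$: writing $d_1^{\,g}\in N_xd_1\cup N_xd_2$ coordinatewise gives congruences modulo $R$ on the entries of $d_1$, but nothing in that system prevents $G_x$ from permuting four or more factors transitively.

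The paper's proof supplies exactly the missing idea, and it bypasses your steps 1--2 entirely. It first runs a Tutte--Sims argument to show that $G$ acts regularly on $s$-arcs for the maximal $s$, whence $G_{x_0}=\langle h_1,h_1^{g^{-1}},\dots,h_1^{g^{-(s-1)}}\rangle$ for a single involution $h_1$ (the stabiliser of an $(s-1)$-arc) and a single shift element $g\in G$. Now project to $\Sym(k)$ via $\pi$: since $G=NG_{x_0}$ and $N\pi=1$, the image $K=(G)\pi=(G_{x_0})\pi$ is generated by $\{\sigma^{\tau^{-i}}:0\le i<s\}$ where $\sigma=h_1\pi$ has order at most $2$ and, crucially, $\tau=g\pi$ lies in $K$. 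Thus $K$ is a $2$-group generated by $K$-conjugates of $\sigma$; if $\langle\sigma\rangle$ were proper in $K$, any maximal subgroup $M$ of $K$ containing $\sigma$ would be normal (as $K$ is a $2$-group) and would therefore contain every $K$-conjugate of $\sigma$, forcing $K\le M$, a contradiction. Hence $K=\langle\sigma\rangle$ has order at most $2$, and since $K$ is transitive on $k\ge2$ points, $k=2$. This is the engine you are missing: the $s$-arc regularity packages the entire stabiliser as conjugates of one involution, and the projection trick (with $\tau\in K$) turns that into the elementary $2$-group fact above.
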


\begin{proof}
As discussed in Subsection~\ref{sub-orbitalgraph}, $\Ga=\GD$ for some $G$-orbital $\Delta$.
Let $s$ be a positive integer. By an $s$-arc of $(\Ga,G)$ we mean   
a vertex-sequence $(x_0,\ldots,x_s)$ such that $(x_i,x_{i+1})\in\Delta$ 
for each $i\in \{0,\ldots,s-1\}$. Now, let $s$ be maximal such 
that $G$ is transitive on the set of $s$-arcs. Note that $s\geq 1$ since $(\Ga,G)\in\calF(4)$.
We claim that $G$ acts regularly on the set of 
$s$-arcs of $\Gamma$. Let $(x_0,\ldots,x_s)$ be an $s$-arc and consider  
the point-wise stabilizer $H$ in $G$ of $x_0,\ldots,x_s$. 
To prove the claim it suffices to prove that $H=1$. 
Since $G$ is not transitive on $(s+1)$-arcs, the group $H$ fixes each of the two vertices $y, y'$ such that $(x_s, y), (x_s,y')\in\Delta$. 
(Let us call $y, y'$ the \emph{out-neighbours} of $x_s$.)
Now $(x_1,\ldots,x_{s},y)$ is an $s$-arc and so, by transitivity, there exists $g\in G$ 
such that $(x_0,\ldots,x_{s-1},x_s)^g=(x_1,\ldots,x_{s},y)$. 
Therefore $H^g=G_{x_0,\ldots,x_s}^g=G_{x_1,\ldots,x_{s},y}$. However, since $H$ fixes $y$ it follows that  
$G_{x_1,\ldots,x_{s},y}\leq H$, and hence $H^g=H$. The fact that $H$ fixes the two out-neighbours 
of $x_s$ implies that $H^g$ fixes the two out-neighbours of $x_s^g=y$, that is to say, $H=H^g$ fixes also the 
two out-neighbours of $y$. At this point 
it is clear that a quick inductive argument (using the connectedness of $\Gamma$) yields that $H=1$.
Thus the claim is proved.

Since $G$ is regular on $s$-arcs, its subgroup $G_{x_0}$ is regular on the set of $s$-arcs starting 
at $x_0$. From this it quickly follows that, for each $i\in \{0,\ldots,s\}$, the subgroup $G_{x_0,\ldots,x_{s-i}}$ has order 
$2^i$. In particular, $|G_{x_0,\ldots,G_{x_{s-1}}}|=2$, say $G_{x_0,\ldots,G_{x_{s-1}}}=\langle h_1\rangle$. Let $g\in G$
be as in the previous paragraph and set $x_{s+1}=y$.  
For $i\in \{2,\ldots,s\}$, define $h_i=h_{i-1}^{g^{-1}}$. It is clear that, for each $i$, we have 
$$
G_{x_0,\ldots,x_{s-i}}=\langle h_1,\ldots,h_i\rangle.
$$
As elements of  $\Aut(N)=\Aut(T)\wr \Sym(k)$, $h_1$ and $g$ may be written as 
$h_1=f\sigma$ and $g=f'\tau$ with $f,f'\in \Aut(T)^k$ and $\sigma,\tau\in \Sym(k)$. 
Note that, since $h_1^2=1$, we also have $\sigma^2=1$.
Let $\pi$ denote the projection map $\Aut(N)\rightarrow \Sym(k)$, so that $(h_1)\pi=\sigma$, and $(g)\pi=\tau$. 
Now $K:=(G)\pi=(NG_{x_0})\pi=(G_{x_0})\pi$ is a $2$-group since $G_{x_0}$ is a $2$-group. Moreover,
\begin{equation}\label{dagger}
K=(G_{x_0})\pi=\langle h_1,h_2,\ldots,h_s\rangle\pi=\langle \sigma,\sigma^{\tau^{-1}},\sigma^{\tau^{-2}},\ldots,\sigma^{\tau^{-(s-1)}}\rangle. 
\end{equation}
We claim that $K=\langle \sigma\rangle$, from which it follows that $k=2$ since $\sigma^2=1$. 
Suppose to the contrary that $\langle\sigma\rangle$ is a proper subgroup of $K$, and let  
$M$ be a maximal subgroup of $K$ containing $\sigma$. As $K$ is a $2$-group, $M$ is normal 
in $K$ and hence $\sigma^{\tau^{-i}}\in M$,  for all $i\in \mathbb{Z}$. It then follows from \eqref{dagger} that 
$K\leq M$, whence $K=M$, which is a contradiction. Thus $K=\langle \sigma\rangle\cong \mathbb{Z}_2$ and $k=2$.
\end{proof}

\section*{Acknowledgements}

The authors are grateful to Gabriel Verret for reading the draft manuscript and for
his advice on exposition and related work. We thank Roman Nedela for his interest in 
our research and advice on references. We thank an anonymous referee for helpful feedback. 
This project was funded by the Deanship of Scientific 
Research (DSR), King Abdulaziz University, Jeddah, under grant no. HiCi/H1433/363-1. The 
authors, therefore, acknowledge with thanks DSR technical and financial support.


\begin{thebibliography}{}

\bibitem{janc1}
J. A. Al-bar, A. N. Al-kenani,  
N. M. Muthana, and  C. E. Praeger, Finite edge-transitive 
oriented graphs of valency four with cyclic normal quotients. Preprint, 2014.

\bibitem{janc2}
J. A. Al-bar, A. N. Al-kenani,  
N. M. Muthana, and  C. E. Praeger, A normal quotient analysis of some families of  edge-transitive 
oriented four-valent graphs. Preprint 2015.




\bibitem{dgh-obit}
E. Bannai, R. L. Griess,
C. E. Praeger, and L. Scott,
The Mathematics of Donald Gordon Higman,
\emph{Mich. Math. J.} {\bf58} (2009), 3--30.

\bibitem{vBon}
J. van Bon, Finite primitive distance-transitive graphs, 
\emph{Eur. J. Combin.} {\bf28} (2007), 517--32.

\bibitem{Bow}
I. Z. Bouwer, Vertex and edge-transitive but not $1$-transitive graphs, \emph{Canad. Math. Bull.} {\bf13} (1970), 231--237.

\bibitem{Ca}
P. J. Cameron, \emph{Permutation groups}. London Math. Soc. Student Texts, Cambridge University Press, Cambridge, 1999.

\bibitem{cpw}
P. J. Cameron, C. E. Praeger, N. C. Wormald,Infinite highly arc transitive digraphs
and universal covering digraphs, \emph{Combinatorica} {\bf13} (1993) 377--396.


\bibitem{CM}
M. D. E. Conder and  D. Maru\v{s}i\v{c}, A tetravalent half-arc-transitive graph with non-abelian
vertex stabilizer, \emph{J. Combin. Theory Ser. B} {\bf88} (2003), 67--76.

\bibitem{CM2}
M. D. E. Conder and  J. Ma, Arc-transitive abelian regular covers of cubic graphs, \emph{J. Algebra} {\bf387} (2013), 215--242.

\bibitem{CPS}
M. D. E. Conder, P. Poto\v{c}nik, and P. \v{S}parl, 
Some recent discoveries about half-arc-transitive graphs,
{\em Ars Math. Contemp.} {\bf8} (2015), 149--162.


\bibitem{CW}
M. D. E. Conder and C. G. Walker, Vertex-transitive non-Cayley graphs with arbitrarily large
vertex-stabilizer, {\em J. Algebraic Combin.} {\bf8} (1998), 29--38.

\bibitem{Dyck}
W. Dyck, Uber Aufstellung und Untersuchung von Gruppe und Irrationalitat regularen Riemannscher
Flachen, \emph{Math. Ann.} {\bf17} (1880), 473--508.


\bibitem{GP}
A. Gardiner and C.E.Praeger, On $4$-valent symmetric graphs, \emph{Eur. J. Combin.} {\bf15} (1994), 375--381.

\bibitem{Gla}
G. Glauberman, Normalizers of $p$-subgroups in fnite groups, \emph{Pacific J. Math.} 
{\bf29} (1969), 137--144.

\bibitem{GR}
C. Godsil and G. F. Royle, \emph{Algebraic Graph Theory}, Springer-Verlag, New York, 2001. 

\bibitem{JS}
G. Jones and D. Singerman, Theory of maps on orientable surfaces, \emph{Proc. London Math. Soc.} {\bf37}
(1978), 273--307.

\bibitem{dgh}
D. G. Higman, Intersection matrices for finite permutation groups, \emph{J. Alg.} {\bf6} (1967), 22--42.

\bibitem{hkd}
A. Hujdurovi\'c, K. Kutnar and D. Maru\v{s}i\v{c}, Half-arc-transitive group actions with 
a small number of alternets, \emph{J. Combin. Theory, Ser. A} {\bf124} (2014), 114--129.


\bibitem{kmswx}
K. Kutnar, D. Maru\v{s}i\v{c}, P. \v{S}parl, R.-J. Wang, M.-Y. Xu, Classification of half-arc-transitive graphs of order 
$4p$, \emph{Eur. J. Combin.} {\bf34} (2013), 1158--1176.

\bibitem{LLM}
C. H. Li, Z. P. Lu and D.  Maru\v{s}i\v{c},
On primitive permutation groups with small
suborbits and their orbital graphs,
\emph{J. Algebra} {\bf279} (2004), 749--770.

\bibitem{LPVZ}
C. H. Li, C. E. Praeger, A. Venkatesh, and S. Zhou,
Finite locally-quasiprimitive graphs, \emph{Disc. Math.}
{\bf246} (2002), 197--218.




\bibitem{LiSh}
M. W. Liebeck and A. Shalev, 
Random $(r,s)$-generation of finite classical groups. 
\emph{Bull. London Math. Soc.} {\bf34} (2002), 185--188.

\bibitem{MMP}
A. Malni\v{c}, D. Maru\v{s}i\v{c} and P. Poto\v{c}nik, Elementary abelian covers of graphs, 
\emph{J. Algebraic Combin.} {\bf20} (2004), 71--97.


\bibitem{Mar1}
D. Maru\v{s}i\v{c}, Recent developments in half-transitive graphs,
\emph{Discrete Math} {\bf182} (1998),  219--231.

\bibitem{Mar2}
D. Maru\v{s}i\v{c}, Half-transitive group actions on finite graphs 
of valency $4$, \emph{J. Combin. Theory (B)} {\bf73} (1998), 41--76.

\bibitem{Mar}
D. Maru\v{s}i\v{c}, Quartic half-arc-transitive graphs with large vertex stabilizers, Discrete Math 299 (2005), 180--193. 

\bibitem{MN}
D. Maru\v{s}i\v{c} and R. Nedela, Maps and half-transitive graphs of valency
4, \emph{Eur. J. Combin.} {\bf19} (1998), 345--354.

\bibitem{MN2}
D. Maru\v{s}i\v{c} and R. Nedela, On the point stabilizers of transitive groups with non-self-paired
suborbits of length $2$, \emph{J. Group Theory} {\bf4} (2001), 19--43.


\bibitem{MP}
D. Maru\v{s}i\v{c} and C. E Praeger. Tetravalent graphs admitting half-transitive 
group actions: alternating cycles, \emph{J. Combin. Theory (B)} {\bf75} (1999), 188--205.

\bibitem{MS}
D. Maru\v{s}i\v{c} and P. Sparl, 
On quartic half-arc-transitive metacirculants. 
\emph{J. Alg. Combin.} {\bf28} (2008), 365--395. 

\bibitem{MW}
D. Maru\v{s}i\v{c} and A. O. Waller, Half-transitive graphs of valency 4 with prescribed attachment
numbers, \emph{J. Graph Theory} {\bf34} (2000), 89--99.

\bibitem{MX}
D. Maru\v{s}i\v{c} and M. Y. Xu, A $1/2$-transitive
graph of valency $4$ with a nonsolvable group of
automorphisms, \emph{J. Graph Theory} {\bf25} (1997), 133--138.

\bibitem{PP}
P. Poto\v{c}nik and R. Po\v{z}ar. Smallest tetravalent half-arc-transitive graphs with vertex-stabiliser 
isomorphic to the dihedral group of order $8$. Preprint, 2014.

\bibitem{PSV11}
P. Poto\v{c}nik, P. Spiga and G. Verret, 
Tetravalent arc-transitive graphs with unbounded vertex-stabilizers.
{\em Bull. Aust. Math. Soc.} {\bf84} (2011), 79--89.

\bibitem{PSV}
P. Poto\v{c}nik, P. Spiga and G. Verret, 
A census of $4$-valent half-arc-transitive graphs and arc-transitive digraphs of valence two. 
\emph{Ars Math. Contemporanea} {\bf 8} (2015), 133--148.

\bibitem{PV}
P. Poto\v{c}nik and G. Verret, 
On the vertex-stabiliser in arc-transitive digraphs, {\em J. Combin. Theory
Series B} {\bf 100} (2010), 497--509.




\bibitem{P89}
C. E. Praeger, Highly arc transitive digraphs, \emph{Europ. J.  Combin.} {\bf 10} (1989),
281--292. 



\bibitem{qp}
C. E. Praeger, An O'Nan--Scott Theorem for finite quasiprimitive permutation 
groups, and an application to 2-arc transitive graphs, \emph{J. London Math. Soc.(2)}
{\bf47} (1993), 227--239.


\bibitem{montreal}
C. E. Praeger (with the assistance of C. H. Li and A. C. Niemeyer),
Finite transitive permutation groups and finite vertex-transitive graphs,  in 
\emph{Graph Symmetry: Algebraic Methods and Applications}, Eds: Gena Hahn and Gert Sabidussi, 
NATO ASI Ser.C 497, KLuwer, 1997. pp. 277-318.

\bibitem{biqp}
C. E. Praeger, Finite transitive permutation groups and bipartite vertex-transitive graphs, 
\emph{Illinois J. Math.} {\bf47} (2003), 461--475.

\bibitem{PPSS}
C. E. Praeger, L. Pyber, P. Spiga, and E. Szab\'{o}, 
Graphs with automorphism groups admitting composition factors of bounded rank. 
\emph{Proc. Amer. Math. Soc.} {\bf140} (2012), 2307--2318. 

\bibitem{PSY}
C. E. Praeger, J. Saxl and K. Yokohama, Distance transitive graphs and finite
simple groups, \emph{Proc. London Math. Soc.} (3) {\bf55} (1987), 1--21.


\bibitem{sab}
G. Sabidussi, Vertex-transitive graphs, \emph{Monatsh. Math.} {\bf68} (1964), 426--438.

\bibitem{seressBath}
\'Akos Seress, Toward the classification of $s$-arc transitive graphs. \emph{Groups St. 
Andrews 2005}. Vol. 2, London Math. Soc. Lecture Note Ser., 340, Cambridge Univ. Press, 
Cambridge, 2007, pp. 401--414.

\bibitem{sims}
C. C. Sims, Graphs and finite permutation groups, \emph{Math. Zeit.} {\bf 95} (1967), 76--86.

\bibitem{Sims1}C.~C.~Sims, Graphs and finite permutation groups. II, \emph{Math. Zeit.} \textbf{103} (1968), 276--281.

\bibitem{SV}
P. Spiga and G. Verret, 
On the order of vertex-stabilisers in vertex-transitive graphs with local group 
$C_p\times C_p$ or $C_p \wr C_2$, Preprint 2013. Available at   arXiv:1311.4308. 

\bibitem{Spa}
P. \v{S}parl, A classification of tightly attached half-arc-transitive graphs of valency 4, 
\emph{J Combin Theory Ser B} {\bf98} (2008), 1076--1108.
%
%


\bibitem{Sz}
G. Szekeres, Polyhedral decompositions of cubic graphs
\emph{Bull. Austral. Math. Soc.} {\bf8} (1973),  367--387.


\bibitem{Tits}
J. Tits, Algebraic and Abstract Simple Groups, 
\emph{Annals of Math.}
Second Series, {\bf80} (1964), 313--329. 


\bibitem{Tits2}
J. Tits, 
\emph{Buildings of spherical type and finite BN-pairs}. 
Lecture Notes in Mathematics. 386. Berlin-Heidelberg-New York, Springer-Verlag, 1974.

\bibitem{Tutte} 
W.~T.~Tutte, A family of cubical graphs, \textit{Proc. Camb. Phil. Soc.} \textbf{43} (1947), 459--474.

\bibitem{Tut}
W. T. Tutte, On the imbedding of linear graphs in surfaces.
\emph{Proc. London Math. Soc. (2)} {\bf51}, (1949). 474--483. 

\bibitem{Tutte2} 
W.~T.~Tutte, On the symmetry of cubic graphs, \textit{Canad. J. Math.} \textbf{11} (1959), 621--624.

\bibitem{Tut3}
W. T. Tutte, A census of planar maps, \emph{Can. J. Math.} {\bf15} (1963), 249--271.

\bibitem{Tut1}
W. T. Tutte, \emph{Connectivity in graphs}, University of Toronto Press, Toronto, 1966.





\bibitem{Wil}
S. Wilson, Semi-transitive graphs, \emph{J. Graph Theory} {\bf45} (2004), 1--27.

\end{thebibliography}
\end{document}